\allowdisplaybreaks \numberwithin{equation}{section}
\numberwithin{equation}{section}
\newtheorem{theorem}{Theorem}[section]
\newtheorem{proposition}[theorem]{Proposition}
\newtheorem{lemma}[theorem]{Lemma}
\newtheorem*{Yudovich's Theorem}{Yudovich's Theorem}
\theoremstyle{definition}
\newtheorem{definition}[theorem]{Definition}
\theoremstyle{remark}
\newtheorem{remark}[theorem]{Remark}
\begin{document}

\title
[Stability of 2D steady Euler flows]{Stability of 2D steady Euler flows related to least energy solutions of the Lane-Emden equation}

 \author{Guodong Wang}
\address{Institute for Advanced Study in Mathematics, Harbin Institute of Technology, Harbin 150001, P.R. China}
\email{wangguodong@hit.edu.cn}


\begin{abstract}
In this paper, we investigate nonlinear stability of planar steady Euler flows related to least energy solutions of the Lane-Emden equation in a smooth bounded domain. We prove the orbital stability of these flows in terms of both  the $L^s$ norm of the vorticity for any $s\in(1,+\infty)$ and the energy norm. As a consequence, nonlinear stability is obtained when the least energy solution is unique, which actually holds for a large class of domains and exponents. The proofs are based on a new variational characterization of least energy solutions in terms of the vorticity, a compactness argument, and proper use of conserved quantities of the Euler equation.
\end{abstract}

\maketitle

\section{Introduction}
\subsection{2D Euler equation and conserved quantities}
Let $D\subset \mathbb R^2$ be a smooth bounded domain. The evolution of an incompressible inviscid fluid in $D$ is governed by the following two-dimensional (2D) Euler equation
\begin{equation}\label{euler}
\begin{cases}
\partial_t\mathbf v+(\mathbf v\cdot\nabla)\mathbf v=-\nabla P& x=(x_1,x_2)\in D, \,t\in(0,+\infty),\\
\nabla\cdot\mathbf v=0& x\in D,\,t\in(0,+\infty),\\
\mathbf v(0,x)=\mathbf v_0(x)&x\in\partial D,
\end{cases}
\end{equation}
where $\mathbf v=(v_1,v_2)$ is the velocity field, $P$ is the scalar pressure, and $\mathbf v_0$ is a given divergence-free field (i.e., $\nabla\mathbf\cdot\mathbf v_0=0$). Here the fluid is assumed to be of unit density. For boundary condition, we assume that there is no mass flow across  the boundary, i.e.,
\begin{align}\label{bc}
\mathbf v\cdot\mathbf n=0\quad  x\in \partial D,\,t\in[0,+\infty),
\end{align}
where $\mathbf n$ denotes the unit normal exterior to $D$. Of course $\mathbf v_0$ is supposed to satisfy \eqref{bc} as well.

Introduce the scalar vorticity $\omega:=\partial_{x_1}v_2-\partial_{x_2}v_1,$ the signed magnitude of the vorticity vector curl$\mathbf v$ (regarding \eqref{euler} as a system of equations in three dimensions with $\mathbf v$ and $P$ not depending on the third spatial variable). Below  we show that the Euler equation \eqref{euler} has an equivalent vorticity form. First, taking the curl on both sides of the first equation of \eqref{euler} we see that $\omega$ satisfies the following transport equation
\begin{equation}\label{vor}
\partial_t\omega+\mathbf v\cdot\nabla\omega=0.\end{equation}
On the other hand, $\mathbf v$ can be determined in terms of $\omega$. In fact, since $\mathbf v$ is divergence-free and $\mathbf v\cdot\mathbf n=0$ on $\partial D$, we can define a function $\psi,$ called the \emph{stream function}, such that  
\begin{equation}\label{sff}
\mathbf v=\nabla^\perp \psi.
\end{equation}
 Here and henceforth, we use $\mathbf b^\perp$ to denote the clockwise rotation  through $\pi/2$ of some planar vector $\mathbf b$, and $\nabla^\perp f$ to denote $(\nabla f)^\perp$ for some function $f$.
It is easy to see that
\begin{equation}\label{pe}
-\Delta \psi=\omega,
\end{equation}
and
\begin{equation}\label{bryc}\psi=\Psi_i \mbox{ on each connected component $\Gamma_i$ of $\partial D$},
\end{equation}
 with $\Psi_i$ being undetermined constants depending on the time variable. To determine $\psi$ in terms of $\omega$, we should take into account Kelvin's circulation theorem, stating that the circulation of the velocity on each $\Gamma_i$ is a conserved quantity. More precisely,
\begin{equation}
\label{cir}\oint_{\Gamma_i}\nabla\psi\cdot\mathbf n dS=c_i
\end{equation}
with $c_i$ being constants depending on $\mathbf v_0$ only. It is not hard to prove that there exists a unique solution $\psi$ to  \eqref{pe}, \eqref{bryc} and \eqref{cir} up to a constant. Therefore $\mathbf v$ is uniquely determined by $\omega$ in view of \eqref{sff}. The way in which $\mathbf v$ is recovered from $\omega$ is usually called the Biot-Savart law.

If $D$ is additionally simply-connected, then the Biot-Savart law has the following simple form
\begin{equation}\label{bs}
\mathbf v=\nabla^{\perp}\mathcal G\omega,
\end{equation}
where $\mathcal G$ is the Green's operator with zero Dirichlet boundary condition, i.e., $\mathcal G\omega$ satisfies
\begin{equation*}
\begin{cases}
-\Delta\mathcal G\omega=\omega &x\in D,\\
\mathcal G\omega=0&x\in\partial D.
\end{cases}
\end{equation*}
 Therefore we have obtained the evolution equation of the vorticity in a simply-connected domain
\begin{equation}\label{ve}
\partial_t\omega+\nabla^{\perp}\mathcal G\omega\cdot\nabla\omega=0,
\end{equation}
which is usually called the \emph{vorticity equation}. Note that if $\omega$ is a solution to the vorticity equation \eqref{ve}, then there is a corresponding pair  $(\mathbf v, P)$  solving the Euler equation \eqref{euler} given by
\[\mathbf v=\nabla^\perp\mathcal G\omega,\quad P(x)=\int_{L_{ x_0, x}}\omega( y)\mathbf v^\perp( y)\cdot d y-\frac{1}{2}|\mathbf v(x)|^2,\]
where $x_0\in D$ is an arbitrarily chosen point and $L_{ x_0, x}$ is an arbitrary  $C^1$ curve joining $x_0$ and $ x$.

In this paper, for the sake of simplicity, we always assume that $D$ is a simply-connected smooth bounded domain. The formulation of the stability problem of steady Euler flows in multi-connected domains is a little different, but has no essential difficulty.

The solvability of the initial value problem for the vorticity equation \eqref{ve} has been studied by many authors in various settings, including Wolibner \cite{Wo} in H\"older space, Yudovich \cite{Y} in $L^\infty$, DiPerna-Majda \cite{DM} in $L^p, 1<p<+\infty$ and Delort \cite{De} in the space of non-negative Radon measures in $H^{-1}.$ In this paper, to make the statements concise, we confine ourselves to the $L^\infty$ setting in which the vorticity equation admits a unique weak solution with fine properties. Below we state one version of Yudovich's result, the detailed proof of which can be found in Burton  \cite{B5} or Marchioro-Pulvirenti  \cite{MPu}.

\begin{Yudovich's Theorem}\label{A}
For any $\omega_0\in L^\infty(D)$, the vorticity equation \eqref{ve} admits a unique weak solution  $\omega(t,x)\in L^\infty((0,+\infty)\times D)$ satisfying
\begin{equation}\label{tweak}
  \int_D\omega_0(x)\phi(0,x)dx+\int_0^{+\infty}\int_D\omega\partial_t\phi+ \omega\nabla^\perp \mathcal G\omega\cdot\nabla\phi dxdt=0
    \end{equation}
for all $\phi\in C_c^{\infty}(\mathbb R\times D)$.
 Moreover, this weak solution has the following three properties:
\begin{itemize}
\item[(i)] $\omega\in C( [0,+\infty); L^s(D))$ for all $s\in[1,+\infty)$;
\item[(ii)] $\omega(\cdot,t)\in \mathcal{R}_{\omega_0}$ for all $t\in[0,+\infty)$, where
\begin{equation}\label{rc}
 \mathcal{R}_{\omega_0}:=\{ w\in L^1_{{\rm loc}}(D)\mid  |\{w>a\}|=|\{\omega_0>a\}|\,\, \forall \,a\in \mathbb R\},
 \end{equation}
 is the rearrangement class of $\omega_0$. Here $\{w>a\}$ stands for $\{x\in D\mid w(x)>a\}$ and $|\cdot|$ denotes the two-dimensional Lebesgue measure;
\item[(iii)]  $ E(\omega(t,\cdot))=E(\omega_0)$ for all $t\in[0,+\infty)$, where
\[E(\omega(t,\cdot)):=\frac{1}{2}\int_D\omega(t,x) \mathcal G\omega (t,x)dx\]
is the kinetic energy of the fluid.

\end{itemize}
 \end{Yudovich's Theorem}
 Note that for any $s\in(1,+\infty)$ the functional $E$ is well-defined and weakly sequentially continuous in $L^s(D)$, in the sense that
 \[\lim_{n\to+\infty}E(w_n)= E(w)\]
 whenever $w_n$ converges  to $w$ weakly in $L^s(D)$ as $n\to+\infty$.
 This can be easily verified by applying $L^p$ estimate and Sobolev embedding theorem.

 According to Yudovich's theorem, any weak solution to the vorticity equation moves on some \emph{isovortical surface}, a set of functions with the same distribution function. As a consequence, the $L^s$ norm of $\omega(t,\cdot),$ where $ 1\leq s\leq +\infty,$ is independent of $t$. This conservative property plays a very important role in our stability analysis.

\subsection{Steady solution and stability}

If a weak solution is independent of the time variable, then it is called a steady solution.
It can be verified that $\omega\in L^\infty(D)$ is a steady solution if and only if
\begin{equation}\label{ss}
\int_D\omega\nabla^{\perp}\mathcal G\omega\cdot \nabla\phi dx=0,\quad\forall\,\phi\in C_c^\infty(D).
\end{equation}

Steady solutions of the vorticity equation  are very common. In fact, if $\omega\in L^\infty(D)$ satisfies
\begin{equation}\label{ss1}
\omega=g(\mathcal G\omega) \mbox{ a.e. in } D,
\end{equation}
where  either $g:\mathbb R\to\mathbb R$ is Lipschitz continuous   or $g:\mathbb R\to\mathbb R\cup\{\pm\infty\}$ is  monotone, then it   satisfies  \eqref{ss}. See Cao-Wang \cite{CW2} for a detailed proof. In terms of the stream function    $u:=\mathcal G\omega$,  \eqref{ss1} can be reformulated equivalently  as follows
\begin{equation}\label{ss2}
\begin{cases}
-\Delta u=g(u)& x\in D,\\
u=0&x\in\partial D.
\end{cases}
\end{equation}
For this reason, construction of steady Euler flows by  studying the semilinear elliptic problem \eqref{ss2} has been an important research topic in mathematical fluid mechanics. See \cite{BPW, CLW, CPY, CW1,LYY, SV} and the references therein.

Given a steady solution, an important related question is to study its nonlinear stability, also called stability of Lyapunov type. Informally,  a steady solution $\tilde\omega$ is said to be nonlinearly stable if for any initial vorticity sufficiently ``close" to $\tilde\omega$, the evolved vorticity remains ``close" to $\tilde\omega$ for all time. In many situations, it is usually more convenient and reasonable to consider a more general  stability concept, i.e., orbital stability for a   set of steady solutions. Below is the precise definition.

 \begin{definition}
Let $\mathcal M\subset L^\infty(D)$ be a set of steady solutions to the vorticity equation (i.e., each $\tilde\omega\in\mathcal M$ satisfies  \eqref{ss}), $\mathcal P$ be a subset of $L^\infty(D)$, and $\|\cdot\|$ be a norm in $L^\infty(D)$.
If for any $\varepsilon>0,$ there exists $\delta>0,$ such that for any $\omega_0\in \mathcal P$ satisfying
\[\inf_{w\in\mathcal M}\|\omega_0-w\|<\delta,\]
 it holds that
 \[\inf_{w\in\mathcal M}\|\omega(t,\cdot)-w\|<\varepsilon \,\,\mbox{ for all }t>0,\]
then $\mathcal M$ is said to be orbitally stable in the norm $\|\cdot\|$ with respect to initial perturbations in $\mathcal P$. Here $\omega(t,x)$ is the unique weak solution to the vorticity equation with initial vorticity $\omega_0.$

\end{definition}
When $\mathcal M$ contains only a steady solution $\tilde\omega$ and is orbitally stable, we say  $\tilde\omega$ is nonlinearly stable.

\begin{remark}
In many cases, $\mathcal M$ is invariant with respect to some continuous symmetry group. For example, orbital stability of a set of steady symmetric vortex pairs in the whole plane, which are invariant under translations parallel to some line,  can be found in \cite{Abe,Bu1,Bu2}; orbital stability of a set of concentrated vortex patches in a disk, which are invariant under rotations with respect to the origin of the disk,   has been investigated in \cite{CWWZ}.
\end{remark}

\begin{remark}
Obviously, the larger the perturbation class $\mathcal P$ is, the more stable $\mathcal M$ is. However, in general there is no direct relation between the stabilities in two different norms, even one is stronger than the other. An example can be found in Lin \cite{LZ2}, Section 4.
\end{remark}
In this paper, we mainly consider the stabilities in two norms, i.e., the $L^s$ norm of the vorticity $\|\omega\|_{L^s(D)}$ and energy norm $\|\omega\|_E$ defined as follows
\begin{equation}\label{en11}
\|\omega\|_E=\left(\int_D|\nabla\mathcal G\omega|^2dx\right)^{1/2}=(2E(\omega))^{1/2}.
\end{equation}

The systematic study of nonlinear stability of planar steady Euler flows dates back to Arnol'd \cite{A1,A2} in the 1960s.
To deal with the stability of steady Euler flows related to solutions of \eqref{ss2}, where $g\in C^1(\mathbb R)$ and is  strictly increasing, Arnol'd established what is now usually called the energy-Casimir functional method and proved several stability criteria. Arnol'd's idea was to study the energy-Casimir functional
\[EC(w)=E(w)-\int_DF(w)dx\]
related to \eqref{ss2}, where $F(s)=\int_0^sg^{-1}(\tau)d\tau.$  By using the fact that $EC$ is a flow invariant, Arnol'd proved that if some solution $\tilde u$ of \eqref{ss2} satisfies
\[\sup_{\mathbb R}g'(\tilde u)<\lambda_1,\]
where $\lambda_1$ is the first eigenvalue of $-\Delta$ in $D$ with zero Dirichlet boundary condition, then the corresponding flow is stable in the $L^2$ norm of the vorticity. This result is  called Arnol'd's second stability theorem. In 1990s, Wolansky and Ghil \cite{WG0,WG} introduced the method of supporting functionals and showed that to prove stability  in the $L^2$ norm of the vorticity it suffices to require the first eigenvalue of the corresponding linearized operator $-\Delta-g'(\tilde u)$ in $L^2(D)$ to be positive, that is, there exists some $c_0>0$ such that
\begin{equation}\label{ycst}
\int_D|\nabla \phi|^2 dx-\int_Dg'(\tilde u)\phi^2 dx\geq c_0\int_D\phi^2dx,\,\,\forall\,\phi\in H^1_0(D).
\end{equation}
Recently, Wang \cite{WA} improved Wolansky and Ghil's result further, showing that the first eigenvalue of  $-\Delta-g'(\tilde u)$ in $L^2(D)$ being nonnegative is sufficient to ensure stability; moreover, the stability obtained in \cite{WA} is in terms of the $L^s(D)$ norm of the vorticity for any $s\in(1,+\infty).$

The results in \cite{A1,A2,WA, WG0,WG} provide some general stability criteria  that cover a large class of planar steady Euler flows, but they still  have serious limitations. On the one hand, all of these criteria require $g$ to be a strictly increasing function, and thus can not be used to deal with steady Euler flows with compactly supported vorticity. On the other hand, the assumption that the first eigenvalue of  $-\Delta-g'(\tilde u)$ in $L^2(D)$ is nonnegative is too strong, and excludes many interesting cases.  A typical example is the
steady Euler flows associated with solutions of the Lane-Emden equation \eqref{le} below, which is exactly the topic of this paper.
We will see in Remark \ref{nsst} that for any nontrivial solution to the Lane-Emden equation the first eigenvalue of the corresponding linearized operator  must be negative, hence nonlinear stability can not obtained by  applying these criteria directly. To obtain stability, new idea  needs to be introduced.

\subsection{Lane-Emden equation and least energy solutions}
Consider the following Lane-Emden equation with zero Dirichlet boundary condition
\begin{equation}\label{le}
\begin{cases}
-\Delta u=|u|^{p-1}u &x\in D,\\
u=0&x\in\partial D,
\end{cases}
\end{equation}
where $1<p<+\infty.$ An $H^1_0$ weak solution of \eqref{le} is a function $u\in H^1_0(D)$ such that
\begin{equation}\label{wsle}
\int_D\nabla u\cdot\nabla \phi dx=\int_D|u|^{p-1}u\phi dx,\,\,\forall\,\phi\in H^1_0(D).
\end{equation}
Note that by Sobolev embedding $u\in H^1_0(D)$ implies $u\in L^s(D)$ for any $s\in[1,+\infty),$ which means $|u|^{p-1}u\in L^s(D)$ for any $s\in[1,+\infty),$ thus the integral in \eqref{wsle} makes sense. Also note that by elliptic regularity theory any $H^1_0$ weak solution at least belongs to $C^2(\bar D)$, thus is in fact a classical solution.

Our main purpose is to investigate the stability of steady Euler flows associated with solutions of \eqref{le}. However, without extra information, this is quite a tough task. 
In this paper, we will only be focusing on a special class of solutions with certain ``minimality" condition, i.e., least energy solutions of \eqref{le}. Recall that a solution $u$ of \eqref{le} is called a \emph{least energy solution}  if
\begin{equation}\label{ccb}
\mathcal I(u)=\inf\{\mathcal I(\phi)\mid \phi\in \mathcal N\},
\end{equation}
where $ \mathcal I:H^1_0(D)\to\mathbb R$ is the natural energy functional related to \eqref{le}
\[\mathcal I(\phi)=\frac{1}{2}\int_D|\nabla\phi|^2dx-\frac{1}{p+1}\int_D|\phi|^{p+1}dx,\quad \phi\in H^1_0(D),\]
and $\mathcal N$ is the Nehari manifold defined by
\[\mathcal N=\left\{\phi\in H^1_0(D)\mid\phi\neq 0,\,\, \int_D|\nabla\phi|^2 dx=\int_D|\phi|^{p+1}dx\right\}.\]

Note that  for any $p\in(1,\infty)$, the Nehari manifold $\mathcal N$ is not empty, and $\mathcal I$  attains its minimum value over $\mathcal N$. Moreover, if $u$ is a minimizer of $\mathcal I$ over $\mathcal N$, then $ u$ must be of constant sign in $D$ (i.e., either $ u>0$ in $D$ or $u<0$ in $D$) and solve the Lane-Emden equation \eqref{le}. These results are classical in the theory of nonlinear elliptic equations, and we refer the interested reader to \S 2.3.2 in \cite{BS} for detailed proofs of these facts.
From these results, we immediately deduce that for a solution $ u$ of \eqref{le}, \eqref{ccb} holds if and only if
\[\mathcal I(u)=\inf\{\mathcal I(\phi)\mid \phi\in H^1_0(D)\setminus\{0\} \mbox{ is a solution of \eqref{le}}\}.\]
This is the reason why $u$ is called a least energy solution.

\begin{remark}\label{nsst}
For any nontrivial solution to $ u$ of \eqref{le}, the first eigenvalue of the corresponding linearized operator $-\Delta-p|u|^{p-1}$  must be negative. In fact, by choosing $\phi=u$ in \eqref{ycst} we get
\[\int_D|\nabla u|^2 dx-p\int_D|u|^{p+1} dx=(1-p)\int_D|u|^{p+1} dx<0.\]
\end{remark}

Denote $\mathcal L_p$ the set of all least energy solutions of \eqref{le}, or equivalently, $\mathcal L_p$ is the set of all minimizers of $\mathcal I$ over $\mathcal N$.
Our aim in this paper is to study the orbital stability of the set of flows whose stream functions belong to $\mathcal L_p$. Since we will mainly work in the vorticity space, it is convenient to define
\[\mathcal  W_p=\{-\Delta u\mid u\in\mathcal L_p\}.\]

Denote
\begin{equation}\label{ccd}
c_p=\inf\{\mathcal I(\phi)\mid \phi\in \mathcal N\},
\end{equation}
which is a positive number depending only on $p$.
For any least energy solution $u$, it holds that
\[\int_D|\nabla u|^2 dx=\int_D|u|^{p+1}dx,\quad \frac{1}{2}\int_D|\nabla u|^2dx-\frac{1}{p+1}\int_D|u|^{p+1}dx=c_p,\]
from which  we get by a simple computation
\[\|u\|_{L^{p+1}}=\left(\frac{2c_p(p+1)}{p-1}\right)^{\frac{1}{p+1}},\]
and
\begin{equation}\label{cce}
\|-\Delta u\|_{L^{1+\frac{1}{p}}(D)}=\||u|^{p-1}u\|_{L^{1+\frac{1}{p}}}=\|u\|_{L^{p+1}}^p=\left(\frac{2c_p(p+1)}{p-1}\right)^{\frac{p}{p+1}}:=\mu_p.
\end{equation}
In the rest of this paper, we denote
\begin{equation}\label{dofsp}
\mathcal S_p=\left\{w\in L^{1+\frac{1}{p}}(D)\mid \|w\|_{ L^{1+\frac{1}{p}}(D)}\leq \mu_p\right\}.
\end{equation}
Obviously $\mathcal W_p\subset \mathcal S_p$.

\subsection{Main results}

Having made enough preparations, we are ready to state our main results. Our first result is about the orbital stability of $\mathcal W_p$ in the $L^s$ norm of the vorticity.

\begin{theorem}\label{thm1}
Let $s\in(1,+\infty)$. Then $\mathcal W_p$ is orbitally stable in the $L^s$ norm of the vorticity with respect to initial perturbations in $L^\infty(D).$
\end{theorem}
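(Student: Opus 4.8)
The plan is to characterize $\mathcal W_p$ variationally as the set of maximizers of the kinetic energy $E$ over the constraint set $\mathcal S_p$, and then exploit that both $E$ and the full distribution function of the vorticity are conserved along the flow. Throughout write $q=1+\frac1p$ (so that $q'=p+1$) and let $S$ be the best constant in the Sobolev inequality $S\|u\|_{L^{p+1}}^2\le\|\nabla u\|_{L^2}^2$ on $H^1_0(D)$, whose extremals are exactly the scalar multiples of least energy solutions of \eqref{le}. First I would establish the sharp bound
\[2E(\omega)=\int_D\omega\,\mathcal G\omega\,dx\le\|\omega\|_{L^q}\,\|\mathcal G\omega\|_{L^{p+1}}\le S^{-1}\|\omega\|_{L^q}^2,\qquad\omega\in L^q(D),\]
obtained by integrating by parts (so that $2E(\omega)=\|\nabla\mathcal G\omega\|_{L^2}^2$) and combining H\"older's inequality with the Sobolev inequality applied to $u=\mathcal G\omega$. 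Tracking the two equality cases---proportionality of $|\mathcal G\omega|^{p+1}$ and $|\omega|^{q}$ with matching sign in H\"older, and $\mathcal G\omega$ being a Sobolev extremal---forces $\omega=\lambda\,|\mathcal G\omega|^{p-1}\mathcal G\omega$ with $\mathcal G\omega$ a rescaled least energy solution, and the normalization $\|\omega\|_{L^q}=\mu_p$ pins the scale so that $\lambda=1$. This gives $\sup_{\omega\in\mathcal S_p}E(\omega)=\tfrac12 S^{-1}\mu_p^2=\frac{(p+1)c_p}{p-1}=:M_p$, attained exactly on $\mathcal W_p$; this is the new variational characterization promised in the abstract.

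Next I would prove a compactness lemma: if $\omega_n\in L^q(D)$ satisfy $\limsup_n\|\omega_n\|_{L^q}\le\mu_p$ and $E(\omega_n)\to M_p$, then a subsequence converges strongly in $L^q$ to some element of $\mathcal W_p$, so that $\inf_{w\in\mathcal W_p}\|\omega_n-w\|_{L^q}\to0$. By reflexivity one extracts a weak $L^q$ limit $\bar\omega$; the weak sequential continuity of $E$ recorded in the excerpt gives $E(\bar\omega)=M_p$, while weak lower semicontinuity gives $\bar\omega\in\mathcal S_p$, so by the first step $\bar\omega\in\mathcal W_p$ with $\|\bar\omega\|_{L^q}=\mu_p$. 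Combined with $\limsup_n\|\omega_n\|_{L^q}\le\mu_p$ this forces $\|\omega_n\|_{L^q}\to\|\bar\omega\|_{L^q}$, and strong convergence follows from the uniform convexity of $L^q$, $1<q<+\infty$.

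For the theorem itself I would argue by contradiction: assume there exist $\varepsilon_0>0$, data $\omega_0^n\in L^\infty(D)$ with $\inf_{w\in\mathcal W_p}\|\omega_0^n-w\|_{L^s}\to0$, and times $t_n$ with $\inf_{w}\|\omega^n(t_n,\cdot)-w\|_{L^s}\ge\varepsilon_0$. Set $\omega_n=\omega^n(t_n,\cdot)$ and pick $w_0^n\in\mathcal W_p$ with $r_n:=\omega_0^n-w_0^n\to0$ in $L^s$. Two conserved quantities drive the argument. Energy conservation gives $E(\omega_n)=E(\omega_0^n)$, and expanding $E(\omega_0^n)=E(w_0^n)+\langle r_n,\mathcal G w_0^n\rangle+E(r_n)$ yields $E(\omega_0^n)\to M_p$: here $E(w_0^n)=M_p$, while $\langle r_n,\mathcal G w_0^n\rangle\to0$ and $E(r_n)\to0$ because $\mathcal G$ maps $L^s(D)$ into $L^\infty(D)$ for every $s>1$ and $\mathcal L_p$---hence $\{\mathcal G w_0^n\}$---is bounded in $L^\infty(D)$ (the set $\mathcal L_p$ is compact in $H^1_0(D)$ by the compactness of the Sobolev embedding in two dimensions). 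Conservation of the distribution function gives $\omega_n\in\mathcal R_{\omega_0^n}$, and hence $\{|\omega_n|^s\}$ is uniformly integrable, since $\{|\omega_0^n|^s\}$ is (the $w_0^n$ lie in the compact set $\mathcal W_p\subset L^s$ and $r_n\to0$ in $L^s$) and uniform integrability depends only on the distribution.

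The main obstacle is that, when $s<q$, closeness of $\omega_0^n$ to $\mathcal W_p$ in $L^s$ does not control $\|\omega_0^n\|_{L^q}$---a tall thin spike is small in $L^s$ but large in $L^q$---so $\|\omega_n\|_{L^q}=\|\omega_0^n\|_{L^q}$ need not stay near $\mu_p$, and the compactness lemma cannot be applied to $\omega_n$ directly. To circumvent this I would truncate: for large $M$ set $v_n=\min(\max(\omega_n,-M),M)$. Uniform integrability makes $\sup_n\|\omega_n-v_n\|_{L^s}$ small uniformly in $M$; the elementary inequality $\min(|\omega_0^n|,M)\le|w_0^n|+\min(|r_n|,M)$ yields $\limsup_n\|v_n\|_{L^q}\le\mu_p$ for each fixed $M$; and the energy decomposition gives $E(v_n)=E(\omega_n)+O(\|\omega_n-v_n\|_{L^s})$, again using that $\mathcal G\omega_n$ is bounded in $L^\infty(D)$. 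A diagonal choice $M\to+\infty$ then extracts an honest maximizing sequence, to which the compactness lemma applies and produces $\tilde w_n\in\mathcal W_p$ with $\|v_n-\tilde w_n\|_{L^q}\to0$; upgrading this to $L^s$ by the Vitali convergence theorem (a.e. convergence along a subsequence together with uniform integrability of $\{|v_n-\tilde w_n|^s\}$) and adding back $\|\omega_n-v_n\|_{L^s}\to0$ contradicts $\inf_w\|\omega_n-w\|_{L^s}\ge\varepsilon_0$. For $s\ge q$ the truncation is unnecessary, since $L^s$ closeness already forces $\|\omega_0^n\|_{L^q}\to\mu_p$, but the truncation argument handles all $s\in(1,+\infty)$ uniformly. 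I expect this passage from $L^q$ compactness to $L^s$ stability, and in particular the control of the spike part through the conserved distribution, to be the crux of the proof.
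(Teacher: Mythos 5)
Your proposal is correct, and while it opens with the same two pillars as the paper---the variational characterization $\mathcal W_p=\{w\in\mathcal S_p\mid E(w)=M_p\}$ (Proposition \ref{prop1}) and the strong $L^{1+\frac1p}$ compactness of maximizing sequences (your compactness lemma is Lemma \ref{scmcs} essentially verbatim: weak limit, weak sequential continuity of $E$, norm convergence, uniform convexity)---it diverges genuinely at both ends. For the characterization, the paper maximizes $E$ over $\mathcal S_p$ directly and derives the relation $\tilde\omega=(2M_p)^{-p}\mu_p^{p+1}(\mathcal G\tilde\omega)^p$ by a first-variation computation along normalized paths (Lemmas \ref{le222}--\ref{lem25}), then pins down $2M_p=\mu_p^{1+\frac1p}$ by comparison with $c_p$; you instead track equality in H\"older plus the sharp Sobolev inequality, which delivers $M_p$ and the equality set in one stroke at the cost of quoting that Sobolev extremals are exactly multiples of least energy solutions. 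The real divergence is in handling $L^\infty$ perturbations measured in $L^s$ with $s<q:=1+\frac1p$: the paper uses Burton's ``followers'' (Lemma \ref{lteb5}), advecting $\zeta_0^n\in\mathcal W_p$ by the perturbed flow so that $\zeta^n(t_n,\cdot)$ lies in $\mathcal S_p$ by rearrangement invariance, stays $\frac1n$-close to $\omega^n(t_n,\cdot)$ in $L^s$ because the difference also solves the transport equation, and is uniformly bounded in $L^\infty$, whence $L^q$ compactness upgrades to every $L^s$ by interpolation. You replace the follower by truncating $\omega^n(t_n,\cdot)$ itself, using the conserved distribution twice---uniform integrability of $\{|\omega^n(t_n,\cdot)|^s\}$ inherited from $t=0$ controls the spike in $L^s$ uniformly in $n$, while $\|T_M(\omega^n(t_n,\cdot))\|_{L^q}=\|T_M(\omega_0^n)\|_{L^q}\le\mu_p+o(1)$ via $\min(|\omega_0^n|,M)\le|w_0^n|+\min(|r_n|,M)$---followed by a diagonal choice of $M$ and Vitali (for $s<q$, plain H\"older suffices). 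Your route needs only Yudovich's property (ii) and avoids the existence theory for the linear transport equation entirely, so it is more self-contained; the follower method buys a uniform $L^\infty$ bound for free and treats all $s\in(1,+\infty)$ without the $s<q$ versus $s\ge q$ case split or truncation bookkeeping. One small repair: the uniform $L^\infty$ bound on $\mathcal W_p$, which your uniform integrability step requires, does not follow from compactness of $\mathcal L_p$ in $H^1_0(D)$ as you parenthetically suggest; it comes from the equation as in Lemma \ref{301}, since $|w|=|\mathcal Gw|^p$ and $\mathcal G$ is bounded from $L^{1+\frac1p}(D)$ to $L^\infty(D)$. With that substitution your outline is sound.
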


\begin{remark}
The assumption $1<s<+\infty$ is necessary in our proof, since we frequently use the property that $L^s(D)$ is reflexive. However, if we choose the perturbation class to be some bounded subset of $L^\infty(D)$, then $s$ can be any number in $[1,+\infty)$, since for any $r,s\in[1,+\infty)$, the $L^{r}$ norm and the $L^{s}$ norm are equivalent norms on any fixed bounded subset of $L^\infty(D)$.
\end{remark}

Our second result is about the orbital stability of $\mathcal W_p$ in the energy norm. Unlike Theorem \ref{thm1}, in this case we can only prove orbital stability with respect to a smaller perturbation class.

\begin{theorem}\label{thm2}
 $\mathcal W_p$ is orbitally stable in the energy norm with respect to initial perturbations in $\mathcal S_p\cap L^\infty(D)$.
\end{theorem}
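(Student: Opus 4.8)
The plan is to reduce Theorem \ref{thm2} to a variational characterization of $\mathcal{W}_p$ as the energy-maximizing set over $\mathcal{S}_p$, and then to run a compactness argument tailored to the energy norm. Concretely, I would first establish (this being the central new ingredient of the paper) the identity
\[ \max_{w\in\mathcal{S}_p} E(w) = E(\mathcal{W}_p) =: M_p, \qquad \text{with }\ \big\{w\in\mathcal{S}_p \mid E(w)=M_p\big\}=\mathcal{W}_p. \]
The inequality ``$\leq$'' follows from H\"older's inequality $2E(w)=\int_D w\,\mathcal{G}w\,dx\leq \|w\|_{L^{1+\frac{1}{p}}}\|\mathcal{G}w\|_{L^{p+1}}\leq\mu_p\|\mathcal{G}w\|_{L^{p+1}}$ combined with the sharp Sobolev inequality $\|\mathcal{G}w\|_{L^{p+1}}\leq S^{-1/2}\|\nabla\mathcal{G}w\|_{L^2}=S^{-1/2}(2E(w))^{1/2}$, where $S$ is the best constant in $\|\nabla u\|_{L^2}^2\geq S\|u\|_{L^{p+1}}^2$; a direct computation using \eqref{cce} shows $\mu_p^2/S=2M_p$, so $E(w)\leq M_p$. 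The equality case forces equality both in H\"older (so $w=c|\mathcal{G}w|^{p-1}\mathcal{G}w$) and in the Sobolev inequality (so $\mathcal{G}w$ is a Sobolev extremal), which together pin $w$ down to $\mathcal{W}_p$.

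With this characterization in hand I would argue by contradiction. Suppose $\mathcal{W}_p$ were not orbitally stable in the energy norm; then there exist $\varepsilon_0>0$, initial data $\omega_{0,n}\in\mathcal{S}_p\cap L^\infty(D)$ with $\inf_{w\in\mathcal{W}_p}\|\omega_{0,n}-w\|_E\to0$, and times $t_n>0$ such that, writing $\tilde\omega_n:=\omega_n(t_n,\cdot)$ for the corresponding Yudovich solutions, $\inf_{w\in\mathcal{W}_p}\|\tilde\omega_n-w\|_E\geq\varepsilon_0$. Two conservation laws drive the argument. First, since $\omega_n(t,\cdot)\in\mathcal{R}_{\omega_{0,n}}$ by Yudovich's theorem (ii), the $L^{1+\frac{1}{p}}$ norm is conserved along the flow, so $\tilde\omega_n\in\mathcal{S}_p$ for every $n$. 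Second, energy is conserved (Yudovich (iii)), and choosing $w_n\in\mathcal{W}_p$ with $\|\omega_{0,n}-w_n\|_E\to0$ and $E(w_n)=M_p$, the triangle inequality in the energy norm yields $E(\tilde\omega_n)=E(\omega_{0,n})\to M_p$. Hence $\{\tilde\omega_n\}$ is a maximizing sequence for $E$ over $\mathcal{S}_p$.

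To conclude I would extract compactness. Since $\mathcal{S}_p$ is a bounded, convex (hence weakly closed) subset of the reflexive space $L^{1+\frac{1}{p}}(D)$, a subsequence satisfies $\tilde\omega_n\rightharpoonup\bar\omega$ weakly in $L^{1+\frac{1}{p}}(D)$ with $\bar\omega\in\mathcal{S}_p$; by the weak sequential continuity of $E$ stated in the excerpt, $E(\bar\omega)=M_p$, so $\bar\omega\in\mathcal{W}_p$ by the characterization above. The decisive step is to upgrade this to strong convergence in the energy norm: here I would use that the Green operator $\mathcal{G}\colon L^{1+\frac{1}{p}}(D)\to H^1_0(D)$ is compact (elliptic regularity gives $\mathcal{G}\colon L^{1+\frac{1}{p}}\to W^{2,1+\frac{1}{p}}$ bounded, and $W^{2,1+\frac{1}{p}}\hookrightarrow H^1_0$ is compact in dimension two for every $p>1$), so $\tilde\omega_n\rightharpoonup\bar\omega$ forces $\mathcal{G}\tilde\omega_n\to\mathcal{G}\bar\omega$ strongly in $H^1_0(D)$, i.e. $\|\tilde\omega_n-\bar\omega\|_E\to0$. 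Then $\inf_{w\in\mathcal{W}_p}\|\tilde\omega_n-w\|_E\leq\|\tilde\omega_n-\bar\omega\|_E\to0$, contradicting the lower bound $\varepsilon_0$. The main obstacle is exactly this weak-to-strong upgrade together with the identification of the weak limit as a genuine maximizer, and it is here that the smaller perturbation class is essential: the conserved quantity is the $L^{1+\frac{1}{p}}$ norm, and only the bound $\mu_p$ keeps the whole orbit inside $\mathcal{S}_p$ so that the weak limit can be recognized as an energy maximizer. For general $L^\infty$ perturbations the conserved $L^{1+\frac{1}{p}}$ norm may exceed $\mu_p$ and the limit need not lie in $\mathcal{W}_p$, which is precisely why Theorem \ref{thm2} restricts to perturbations in $\mathcal{S}_p\cap L^\infty(D)$.
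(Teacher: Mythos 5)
Your proposal is correct, and while it follows the paper's announced two-ingredient strategy (variational characterization of $\mathcal W_p$ plus compactness plus the conserved quantities from Yudovich's theorem), it genuinely diverges from the paper's execution at two points. First, you prove the characterization $\{w\in\mathcal S_p \mid E(w)=M_p\}=\mathcal W_p$ by H\"older plus the sharp subcritical Sobolev inequality, whereas the paper (Proposition \ref{prop1}, via Lemmas \ref{le222}--\ref{lem25}) computes the first variation of $E$ along normalized perturbations to obtain the Euler--Lagrange relation $\tilde\omega=(2M_p)^{-p}\mu_p^{p+1}(\mathcal G\tilde\omega)^p$ and then matches constants in Lemma \ref{lem24}. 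Your route is slicker but leans on the standard identification of least energy solutions with Sobolev extremals (on the Nehari manifold $\mathcal I(u)=\frac{p-1}{2(p+1)}\|u\|_{L^{p+1}}^{p+1}$, so $c_p=\frac{p-1}{2(p+1)}S^{\frac{p+1}{p-1}}$, whence $\mu_p=S^{\frac{p}{p-1}}$ and indeed $\mu_p^2/S=\mu_p^{1+\frac1p}=2M_p$); in the equality case you still owe the normalization step showing the H\"older multiplier is $1$ (rescale $\bar u=c^{\frac{1}{p-1}}\mathcal Gw$, which is least energy, and compare $\|-\Delta\bar u\|_{L^{1+\frac1p}}=\mu_p$ with $\|w\|_{L^{1+\frac1p}}=\mu_p$ to force $c=1$) --- a fillable detail, not a gap. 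Second, and more substantially, the paper does not run compactness at the times $t_n$: it converts the energy-norm separation at $t_n$ into $L^{1+\frac1p}$ separation via Lemma \ref{ufcu}, invokes Theorem \ref{thm1} with $s=1+\frac1p$ to transport that separation back to $t=0$, and then applies Lemma \ref{scmcs} to the \emph{initial} data, reaching a contradiction in the $L^{1+\frac1p}$ norm. You instead stay at time $t_n$: conservation of the rearrangement class keeps $\omega^n(t_n,\cdot)\in\mathcal S_p$, energy conservation makes it a maximizing sequence, and you upgrade the weak $L^{1+\frac1p}$ limit $\bar\omega\in\mathcal W_p$ to strong energy-norm convergence via compactness of $\mathcal G:L^{1+\frac1p}(D)\to H^1_0(D)$ (your embedding computation is right, since $q=1+\frac1p>1$ gives $q^\ast=\frac{2q}{2-q}>2$ in dimension two; alternatively one can avoid compactness of $\mathcal G$ altogether by bilinearity, $\|\omega^n(t_n,\cdot)-\bar\omega\|_E^2=2E(\omega^n(t_n,\cdot))-2\int_D\omega^n(t_n,\cdot)\,\mathcal G\bar\omega\,dx+2E(\bar\omega)\to 2M_p-4M_p+2M_p=0$, using only the stated weak sequential continuity of $E$). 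What each approach buys: yours is self-contained modulo the characterization and is arguably shorter, making Theorem \ref{thm2} independent of Theorem \ref{thm1} and of the follower machinery behind it; the paper's detour reuses already-proved results and delivers the contradiction in the stronger $L^{1+\frac1p}$ topology. Your closing remark on why the perturbation class must be $\mathcal S_p\cap L^\infty(D)$ correctly identifies where the constraint enters both arguments.
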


The proofs of Theorems \ref{thm1} and \ref{thm2} include two key ingredients. The first one is a new variational characterization of the set of least energy solutions $\mathcal W_p$ in terms of the vorticity. More precisely, we will show that  $\mathcal W_p$ is exactly the set of maximizers of $E$ over $\mathcal S_p$. The second one is compactness. We will prove that for any maximizing sequence of $E$ over $\mathcal S_p$, there  exist a subsequence that converges strongly to some maximizer.
Having these two key ingredient,   we can   prove Theorems \ref{thm1} and \ref{thm2} by properly using flows invariants of the Euler equation. It is worth noting that suitable variational characterization and compactness argument are  also the key  ingredients in the
 stability analysis for  other planar steady Euler flows. For example, see \cite{BG} for
flows related to solutions of the mean field  equation,  \cite{CW1,CWN,WA2} for   flows with sharply concentrated vorticity, and a recent work \cite{Abe} concerning Lamb's circular vortex pairs.

Based on Theorems \ref{thm1} and \ref{thm2}, we can prove the nonlinear stability of steady Euler flows related to isolated least energy solutions.
For any $\tilde\omega\in \mathcal W_p$, it is said to be isolated if there is a positive distance between $\tilde\omega$ and $\mathcal W_p\setminus \{\tilde\omega\}$, i.e.,
\begin{equation}\label{isl}
\inf_{w\in\mathcal W_p\setminus \{\tilde\omega\}}\|w-\tilde\omega\|_{L^1(D)}>0.
\end{equation}
We will see in Lemma \ref{eqnms} that  in \eqref{isl}  the $L^1$ norm can be replaced by the $L^s$ norm for any  $s\in[1,+\infty)$ or the energy norm $\|\cdot\|_E.$

\begin{theorem}\label{thm3}
Let $\tilde\omega\in \mathcal W_p$ be isolated in the sense of \eqref{isl}.
 Then $\tilde\omega$ is stable in the $L^s$ norm of the vorticity with respect to initial perturbations in $L^\infty(D)$ for any fixed $s\in(1,+\infty)$.
\end{theorem}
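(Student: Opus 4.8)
The plan is to deduce the nonlinear stability of the single steady flow $\tilde\omega$ from the orbital stability of the entire set $\mathcal W_p$ established in Theorem \ref{thm1}, using the isolation hypothesis \eqref{isl} together with the continuity in time of the vorticity to \emph{trap} the orbit near $\tilde\omega$. The underlying principle is that orbital stability keeps $\omega(t,\cdot)$ close to $\mathcal W_p$ for all time, while isolation ensures there is a genuine gap between $\tilde\omega$ and every other element of $\mathcal W_p$; a connectedness argument then prevents the orbit from jumping across that gap.

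First I would record the isolation gap in the $L^s$ norm. By Lemma \ref{eqnms}, the hypothesis \eqref{isl} is equivalent to
\[
d:=\inf_{w\in\mathcal W_p\setminus\{\tilde\omega\}}\|w-\tilde\omega\|_{L^s(D)}>0.
\]
Fix any target $\varepsilon>0$; since achieving a smaller tolerance implies achieving any larger one, I may assume $\varepsilon<d/3$. Apply Theorem \ref{thm1} with this $\varepsilon$ to obtain $\delta_1>0$ such that any $\omega_0\in L^\infty(D)$ with $\inf_{w\in\mathcal W_p}\|\omega_0-w\|_{L^s}<\delta_1$ produces a solution satisfying $\inf_{w\in\mathcal W_p}\|\omega(t,\cdot)-w\|_{L^s}<\varepsilon$ for all $t>0$. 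Set $\delta=\min\{\delta_1,\varepsilon\}$ and consider $\omega_0\in L^\infty(D)$ with $\|\omega_0-\tilde\omega\|_{L^s}<\delta$; in particular $\inf_{w\in\mathcal W_p}\|\omega_0-w\|_{L^s}<\delta_1$, so the orbital stability conclusion holds for the corresponding solution.

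The core step is a dichotomy. For each fixed $t$, orbital stability provides some $w_t\in\mathcal W_p$ with $\|\omega(t,\cdot)-w_t\|_{L^s}<\varepsilon$. If $w_t=\tilde\omega$, then $f(t):=\|\omega(t,\cdot)-\tilde\omega\|_{L^s}<\varepsilon$. If $w_t\neq\tilde\omega$, then the reverse triangle inequality together with the gap $d$ gives $f(t)\ge\|w_t-\tilde\omega\|_{L^s}-\|\omega(t,\cdot)-w_t\|_{L^s}>d-\varepsilon$. Hence $f(t)\notin[\varepsilon,d-\varepsilon]$ for every $t\ge0$, and this excluded interval is nonempty because $\varepsilon<d-\varepsilon$. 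Since $t\mapsto\omega(t,\cdot)$ is continuous into $L^s(D)$ by property (i) of Yudovich's theorem, $f$ is continuous on $[0,+\infty)$, while $f(0)=\|\omega_0-\tilde\omega\|_{L^s}<\delta\le\varepsilon$. If $f(t_1)>d-\varepsilon$ for some $t_1$, the intermediate value theorem forces $f$ to attain a value in $[\varepsilon,d-\varepsilon]$ between times $0$ and $t_1$, a contradiction. Therefore $f(t)<\varepsilon$ for all $t\ge0$, which is exactly the asserted stability of $\tilde\omega$ in the $L^s$ norm.

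The main obstacle is not any single hard estimate but making the trapping argument airtight: it requires three ingredients to fit together, namely the quantitative isolation gap in the correct $L^s$ norm supplied by Lemma \ref{eqnms}, the global-in-time continuity of the vorticity in $L^s(D)$ from Yudovich's theorem, and the choice $\varepsilon<d/3$ guaranteeing that the forbidden interval $[\varepsilon,d-\varepsilon]$ genuinely separates the ``near $\tilde\omega$'' regime from the ``near another least energy solution'' regime. One should also be careful that Theorem \ref{thm1} is applied to $\tilde\omega$ merely as a particular element of $\mathcal W_p$, so that $\|\omega_0-\tilde\omega\|_{L^s}<\delta$ indeed triggers the smallness of $\inf_{w\in\mathcal W_p}\|\omega_0-w\|_{L^s}$ needed to start the argument.
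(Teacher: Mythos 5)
Your proof is correct and follows essentially the same strategy as the paper's: reduce the isolation gap to the $L^s$ norm via Lemma \ref{eqnms}, invoke the orbital stability of $\mathcal W_p$ from Theorem \ref{thm1} with $\varepsilon$ small relative to the gap $d$, and use the continuity $\omega\in C([0,+\infty);L^s(D))$ from Yudovich's theorem to trap the orbit near $\tilde\omega$. The only cosmetic difference is that the paper realizes the trapping by locating a time $t_2$ with $\|\omega(t_2,\cdot)-\tilde\omega\|_{L^s(D)}=d/2$ and contradicting orbital stability there, whereas you phrase it as a forbidden interval $[\varepsilon,d-\varepsilon]$ excluded by the dichotomy plus the intermediate value theorem --- the same argument in different packaging.
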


\begin{theorem}\label{thm4}
Let $\tilde\omega\in \mathcal W_p$ be isolated in the sense of \eqref{isl}.
 Then $\tilde\omega$ is  stable in the energy norm  with respect to initial perturbations in $\mathcal S_p\cap L^\infty(D)$.
\end{theorem}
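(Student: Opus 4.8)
The plan is to deduce this single-orbit stability result from the orbital stability of the whole set $\mathcal W_p$ (Theorem \ref{thm2}) by exploiting the isolation of $\tilde\omega$ through a connectedness-in-time argument. The guiding idea is that orbital stability keeps the evolved vorticity close to $\mathcal W_p$, while isolation prevents the orbit from drifting from a neighborhood of $\tilde\omega$ to a neighborhood of a \emph{different} least energy solution, since doing so would force it to cross the ``gap'' separating $\tilde\omega$ from $\mathcal W_p\setminus\{\tilde\omega\}$.

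First I would fix this gap. By Lemma \ref{eqnms} the isolation condition \eqref{isl} may be stated in the energy norm, so
\[
d_0:=\inf_{w\in\mathcal W_p\setminus\{\tilde\omega\}}\|w-\tilde\omega\|_E>0.
\]
Given $\varepsilon>0$, set $\varepsilon'=\min\{\varepsilon,d_0/3\}$. Applying Theorem \ref{thm2} with accuracy $\varepsilon'$ produces $\delta_1>0$ such that every $\omega_0\in\mathcal S_p\cap L^\infty(D)$ with $\inf_{w\in\mathcal W_p}\|\omega_0-w\|_E<\delta_1$ evolves into a solution satisfying $\inf_{w\in\mathcal W_p}\|\omega(t,\cdot)-w\|_E<\varepsilon'$ for all $t>0$. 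Taking $\delta=\min\{\delta_1,\varepsilon'\}$, any initial datum with $\|\omega_0-\tilde\omega\|_E<\delta$ satisfies the hypothesis of Theorem \ref{thm2} (because $\tilde\omega\in\mathcal W_p$), so its orbit stays within $\varepsilon'$ of $\mathcal W_p$ for all time.

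The next step is to secure the time-continuity needed for the connectedness argument. Since $\omega_0\in L^\infty(D)$, property (i) of Yudovich's theorem gives $\omega\in C([0,+\infty);L^2(D))$, and the linear map $w\mapsto\nabla\mathcal G w$ is bounded from $L^2(D)$ into $L^2(D)$ (indeed $\|\nabla\mathcal G w\|_{L^2}^2=\int_D w\,\mathcal G w\,dx\le C\|w\|_{L^2}^2$ by the standard energy estimate). Hence $f(t):=\|\omega(t,\cdot)-\tilde\omega\|_E=\|\nabla\mathcal G(\omega(t,\cdot)-\tilde\omega)\|_{L^2(D)}$ is continuous on $[0,+\infty)$. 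At each time $t$ there is $w^\ast\in\mathcal W_p$ with $\|\omega(t,\cdot)-w^\ast\|_E<\varepsilon'$; either $w^\ast=\tilde\omega$, giving $f(t)<\varepsilon'$, or $w^\ast\in\mathcal W_p\setminus\{\tilde\omega\}$, giving $f(t)\ge d_0-\varepsilon'\ge 2d_0/3$ by the triangle inequality. Thus $f(t)$ never lies in $[\varepsilon',2d_0/3)$.

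Finally I would close by connectedness. The sets $U_1=\{t\ge 0:f(t)<\varepsilon'\}$ and $U_2=\{t\ge 0:f(t)>d_0/2\}$ are open by continuity of $f$, disjoint since $\varepsilon'\le d_0/3<d_0/2$, and together cover $[0,+\infty)$ because $f$ avoids $[\varepsilon',2d_0/3)\supset[\varepsilon',d_0/2]$. As $f(0)=\|\omega_0-\tilde\omega\|_E<\delta\le\varepsilon'$, we have $0\in U_1$, so $U_1\ne\emptyset$; connectedness of $[0,+\infty)$ then forces $U_1=[0,+\infty)$, i.e.\ $f(t)<\varepsilon'\le\varepsilon$ for all $t>0$, which is exactly the asserted stability in the energy norm. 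The main obstacle, conceptually, is ruling out the orbit's migration between distinct least energy solutions; this is resolved precisely by the gap $d_0$ combined with the time-continuity of $f$, and the one technical point to verify carefully is this continuity in the energy norm, which follows from the boundedness of $\nabla\mathcal G$ noted above together with Yudovich's $L^2$-continuity.
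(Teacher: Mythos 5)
Your proof is correct and follows essentially the same route as the paper: the paper proves Theorem \ref{thm4} by combining Lemma \ref{eqnms} (isolation gap in the energy norm), the orbital stability of $\mathcal W_p$ from Theorem \ref{thm2}, and the time-continuity of $\|\omega(t,\cdot)\|_E$, then runs the same gap-crossing intermediate-value argument as in the proof of Theorem \ref{thm3}, of which your connectedness formulation with the open sets $U_1,U_2$ is merely a rephrasing. A minor bonus of your write-up is that you actually justify the continuity of $t\mapsto\|\omega(t,\cdot)-\tilde\omega\|_E$ via Yudovich's $L^2$-continuity and the bound $\|\nabla\mathcal G w\|_{L^2(D)}\leq C\|w\|_{L^2(D)}$, a point the paper asserts without proof.
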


By Theorems \ref{thm3} and \ref{thm4}, to prove the stability of a planar steady Euler flow relate to some given least energy solution of \eqref{le}, it suffices to show that this least energy solution is isolated in the sense of \eqref{isl}. However, as far as we know,  there is no general result on the isolatedness of least energy solutions for general $D$ and $p$. In fact, when $D$ is an annulus, as $p\to+\infty$ least energy solutions of \eqref{le} will concentrate at some global minimum point of the Robin function of $D$ (see \cite{RW1, RW2} for example), hence any least energy solution can not be isolated due to rotational invariance (in an annulus the motion of a real fluid can not be described by the vorticity equation \eqref{ve} anymore, but it is still interesting to discuss this example mathematically).
However, it has been proved in the literature that  for some special domains or exponents the positive least energy solution of \eqref{le} is unique, and thus must be isolated.

\begin{theorem}[\cite{DA}\cite{LCS}]\label{thm5}
Consider the following semilinear elliptic problem
\begin{equation}\label{ple}
\begin{cases}
-\Delta u=u^p&x\in D,\\
u>0&x\in D,\\
u=0&x\in\partial D.
\end{cases}
\end{equation}
Then
\begin{itemize}
\item[(i)] for any smooth bounded convex domain $D$ and $p\in(1,+\infty),$  \eqref{ple} admits a unique least energy solution;
\item[(ii)] for any smooth bounded domain $D$, there exists some $\delta_0>0$, such that for any $p\in(1,1+\delta_0)$, \eqref{ple} admits a unique solution.
\end{itemize}
 \end{theorem}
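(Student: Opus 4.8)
The two assertions have a different character, so I would establish them by different methods.

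\emph{Part (ii)} is a bifurcation-from-the-first-eigenvalue statement. Given any positive solution $u$ of \eqref{ple}, put $t=\|u\|_{L^\infty(D)}$, $w=u/t$ and $\mu=t^{p-1}$, so that $w$ solves $-\Delta w=\mu w^p$ with $0\le w\le1$ and $\|w\|_{L^\infty(D)}=1$. Since $w^p\to w$ as $p\to1^+$, the formal limit is the linear eigenvalue problem $-\Delta w=\mu w$, $\|w\|_{L^\infty(D)}=1$, whose only positive solution is $w=\phi_1/\|\phi_1\|_{L^\infty(D)}$, $\mu=\lambda_1$, where $(\lambda_1,\phi_1)$ is the first Dirichlet eigenpair of $-\Delta$ on $D$. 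First I would show, by testing the equation against $\phi_1$ and using elliptic (Harnack and Schauder) estimates to control the rescaling, that every such normalized solution satisfies $\mu\to\lambda_1$ and $w\to\phi_1/\|\phi_1\|_{L^\infty(D)}$ in $C^1(\bar D)$ as $p\to1^+$. Then, since $\lambda_1$ is simple and the kernel of $-\Delta-\lambda_1$ is spanned by $\phi_1$, a Lyapunov--Schmidt reduction (equivalently the Crandall--Rabinowitz theorem) at the point $(p,\mu,w)=(1,\lambda_1,\phi_1/\|\phi_1\|_{L^\infty(D)})$ produces a unique local branch $p\mapsto(\mu(p),w(p))$ of positive solutions. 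The convergence of \emph{every} normalized solution to this single limit, combined with local uniqueness, forces all positive solutions to lie on the branch for $p$ close to $1$; undoing the scaling through $t=\mu(p)^{1/(p-1)}$ then yields a unique positive solution of \eqref{ple}.

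\emph{Part (i)} I would approach by a continuation argument in $p$ driven by nondegeneracy. The preliminary facts are standard: each least energy solution $u_p$ minimizes $\mathcal I$ over $\mathcal N$ and hence has Morse index exactly $1$, so the linearized operator $L_p=-\Delta-pu_p^{p-1}$ has precisely one negative eigenvalue. The decisive step is to show that, when $D$ is convex, $L_p$ is nondegenerate, i.e. $\ker L_p=\{0\}$ in $H^1_0(D)$, equivalently its second eigenvalue is strictly positive. Granting this, every least energy solution is isolated, so $\mathcal W_p$ is finite; uniform a priori bounds on compact subintervals of $p$ (available because every exponent is subcritical in the plane) keep the solution set compact, while nondegeneracy rules out fold bifurcations. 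A degree-theoretic bookkeeping---each nondegenerate Morse-index-one solution contributing local degree $-1$---then shows the relevant count is locally constant in $p$, and matching it against the single nondegenerate solution near $p=1$ supplied by part (ii) forces exactly one least energy solution for every $p\in(1,+\infty)$.

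The main obstacle is precisely the nondegeneracy of $L_p$ on convex domains for all $p$; this is where convexity is indispensable and where the general criteria recalled in the introduction are useless, since by Remark~\ref{nsst} the first eigenvalue of $L_p$ is always negative. I would attack it by confronting a hypothetical kernel element $v\in\ker L_p$ with the partial derivatives $\partial_{x_i}u_p$, which satisfy $L_p(\partial_{x_i}u_p)=0$ in $D$ but carry nonzero boundary data whose sign is dictated by $\partial_\nu u_p<0$ on $\partial D$; pairing these against $v$ and invoking a Pohozaev-type integral identity together with the favorable sign of the boundary curvature on a convex $D$ should yield a contradiction unless $v\equiv0$. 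This step is the technically heaviest and most geometry-dependent ingredient, and is the reason the statement is confined to convex domains.
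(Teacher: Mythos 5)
The paper does not prove this theorem at all: it is imported from the literature, with part (i) due to Lin \cite{LCS} and part (ii) due to Dancer \cite{DA}, as the text notes immediately after the statement. So the comparison is with those references, and your outline does in fact reconstruct their strategies. For (ii), the rescaling $w=u/\|u\|_{L^\infty(D)}$, $\mu=\|u\|_{L^\infty(D)}^{p-1}$, the identification of the limit as the first Dirichlet eigenpair, and a Lyapunov--Schmidt/implicit-function step at $(p,\mu,w)=(1,\lambda_1,\phi_1/\|\phi_1\|_{L^\infty(D)})$ combined with the convergence of \emph{every} normalized solution to that single point is essentially Dancer's route. Two technical points deserve flagging, since they are where the actual work lies: the map $(p,w)\mapsto w^p$ is not obviously differentiable up to the boundary, where $w$ vanishes (one must control $w^p\log w$ near $\partial D$, e.g.\ via barriers comparing $w$ with the distance function), and the sup-norm normalization is not differentiable, so for the implicit function theorem one should normalize instead by $\|w\|_{L^2(D)}$ or $\int_D w\phi_1\,dx$.

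For (i), identifying nondegeneracy of $L_p=-\Delta-pu_p^{p-1}$ on convex domains as the crux, and attacking it by pairing a hypothetical kernel element against the translation derivatives $\partial_{x_i}u_p$ (which solve $L_p\phi=0$ with boundary data whose sign is governed by $\partial_\nu u_p<0$ and the curvature of a convex $\partial D$) under the Morse-index-one constraint, is precisely the heart of Lin's proof. However, your degree-theoretic bookkeeping has a genuine gap: Leray--Schauder degree counts \emph{all} positive solutions of \eqref{ple}, and the total degree $-1$ is perfectly compatible with, say, two nondegenerate index-one solutions together with one nondegenerate index-two solution. Since uniqueness of all positive solutions on convex domains is open for general $p$ (as the paper remarks right after the theorem, citing \cite{DGIP} only for large $p$), degree alone cannot single out the least energy solutions from the rest of the solution set, so ``matching the count'' against the unique solution near $p=1$ does not follow. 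The repair, which is what Lin actually does, is a continuation argument confined to the least-energy set: uniform a priori bounds and continuity of $p\mapsto c_p$ make $\{(p,u):u\ \text{least energy at}\ p\}$ compact over compact $p$-intervals; nondegeneracy plus the implicit function theorem show its cardinality is locally constant (the branch through a least energy solution stays least-energy for nearby $p$ because its energy stays close to $c_p$, and conversely any least energy solution at nearby $p$ lies near one at $p_0$ by compactness); anchoring at $p$ near $1$ via (ii), where the unique solution is also nondegenerate, then yields uniqueness for all $p\in(1,+\infty)$. With that substitution, your plan is faithful to the cited proofs.
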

The proof of (i) can be found in Lin \cite{LCS}, and the proof of (ii) can be found in Dancer \cite{DA}.

\begin{remark}
It has been proved in \cite{DGIP} that for any smooth bounded convex domain $D$, there exists some $p_0>0$, such that for any $p>p_0$, \eqref{ple}  has a unique solution. However, for general $p$ the uniqueness problem related to \eqref{ple} is still open.
\end{remark}

A direct consequence of Theorems \ref{thm3}, \ref{thm4} and \ref{thm5} is the following theorem.
\begin{theorem}\label{coro6}
Suppose $D$ and $p$ satisfy the assumption of (i) or (ii) in Theorem \ref{thm5}. Let $\tilde u$ be the unique solution to \eqref{ple} and denote $\tilde\omega=-\Delta\tilde u$. Then
\[\mathcal W_p=\{\tilde\omega, -\tilde\omega\}.\]
 As a result,
 $\tilde\omega$ is stable in the $L^s$ norm of the vorticity with respect to initial perturbations in $L^\infty(D)$ for any fixed $s\in(1,+\infty)$, and is also  stable in the energy norm $\|\cdot\|_E$ with respect to initial perturbations in $\mathcal S_p\cap L^\infty(D)$.

\end{theorem}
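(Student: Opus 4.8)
The plan is to reduce the entire statement to the single structural identity $\mathcal W_p=\{\tilde\omega,-\tilde\omega\}$; once this is established, both stability assertions follow at once from Theorems \ref{thm3} and \ref{thm4}, since the isolatedness hypothesis \eqref{isl} becomes trivial to check when $\mathcal W_p$ has only two elements. So the real work is purely in identifying $\mathcal W_p$, and this is where the uniqueness input of Theorem \ref{thm5} enters.

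First I would record two structural facts about the set $\mathcal L_p$ of least energy solutions of the Lane--Emden equation \eqref{le}. On the one hand, as already noted in the discussion preceding \eqref{ccb}, every minimizer of $\mathcal I$ over $\mathcal N$ has constant sign, i.e. is either strictly positive or strictly negative in $D$. On the other hand, the nonlinearity $|u|^{p-1}u$ and the functional $\mathcal I$ are both odd, and $\mathcal N$ is invariant under $u\mapsto -u$, so $u\in\mathcal L_p$ if and only if $-u\in\mathcal L_p$. Moreover, a \emph{positive} least energy solution $u>0$ satisfies $|u|^{p-1}u=u^p$, hence solves the positive problem \eqref{ple}; conversely, since minimizers are constant-sign and positive and negative ones share the same energy, the positive least energy solutions of \eqref{le} are exactly the least energy solutions of \eqref{ple}.

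Next I would invoke Theorem \ref{thm5}. Under hypothesis (i) (convex $D$, any $p$) or (ii) ($p$ close to $1$), problem \eqref{ple} admits a unique least energy solution $\tilde u$; in case (ii) it has a unique solution altogether, which a fortiori forces uniqueness of the positive least energy solution. Now take any $v\in\mathcal L_p$. If $v>0$, then $v$ is a positive least energy solution of \eqref{le}, hence a least energy solution of \eqref{ple}, so $v=\tilde u$ by uniqueness. If $v<0$, then $-v>0$ lies in $\mathcal L_p$ by the odd symmetry, so $-v=\tilde u$, i.e. $v=-\tilde u$. Therefore $\mathcal L_p=\{\tilde u,-\tilde u\}$, and applying $-\Delta$ gives $\mathcal W_p=\{\tilde\omega,-\tilde\omega\}$ with $\tilde\omega=-\Delta\tilde u=\tilde u^{\,p}>0$.

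Finally I would verify the isolatedness condition \eqref{isl} for $\tilde\omega$. Since $\mathcal W_p\setminus\{\tilde\omega\}=\{-\tilde\omega\}$, the infimum in \eqref{isl} equals $\|{-\tilde\omega}-\tilde\omega\|_{L^1(D)}=2\|\tilde\omega\|_{L^1(D)}$, which is strictly positive because $\tilde\omega=\tilde u^{\,p}$ is a nonzero nonnegative function. Thus $\tilde\omega$ is isolated in the sense of \eqref{isl}, and Theorems \ref{thm3} and \ref{thm4} yield stability in the $L^s$ norm of the vorticity for every $s\in(1,+\infty)$ under $L^\infty(D)$ perturbations, and in the energy norm under $\mathcal S_p\cap L^\infty(D)$ perturbations, as claimed. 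The statement is a genuine corollary, so there is no substantial obstacle; the only point demanding care is keeping the two cases of Theorem \ref{thm5} distinct—in case (i) uniqueness is asserted only within the class of least energy solutions, so one must argue inside $\mathcal L_p$ rather than among arbitrary solutions—but since every element of $\mathcal L_p$ is already a constant-sign solution of \eqref{le}, this introduces no real difficulty.
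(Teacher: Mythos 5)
Your proposal is correct and takes essentially the same route as the paper, which states Theorem \ref{coro6} as a direct consequence of Theorems \ref{thm3}, \ref{thm4} and \ref{thm5} without writing out the intermediate steps. The details you supply --- the odd symmetry of $\mathcal I$ and $\mathcal N$ combined with the constant-sign property of minimizers to identify $\mathcal L_p=\{\tilde u,-\tilde u\}$ (arguing within least energy solutions in case (i)), and the trivial verification of \eqref{isl} via $\|\tilde\omega-(-\tilde\omega)\|_{L^1(D)}=2\|\tilde\omega\|_{L^1(D)}>0$ --- are exactly the intended argument.
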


This paper is organized as follows. In Section 2, we give a new variational characterization of $\mathcal W_p$, which is essential for later proofs . In Section 3, we prove Theorems \ref{thm1} and \ref{thm2}. In Section 4, we prove Theorems \ref{thm3} and \ref{thm4}. In Section 5, we discuss the analogous case of $p\in(0,1]$.

\section{New variational characterization of least energy solutions}

In this section, we give least energy solutions a new equivalent variational characterization in terms of suitable flow invariants. To be precise, our task in this section is to prove the following proposition

\begin{proposition}\label{prop1}
It holds that
$\mathcal W_p=\{w\in\mathcal S_p\mid E(w)=M_p\},$ where
\begin{equation}\label{maxf}
M_p=\sup\{E(w)\mid w\in \mathcal S_p\}.
\end{equation}
\end{proposition}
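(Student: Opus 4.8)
The plan is to prove the proposition by establishing a \emph{sharp} quadratic inequality bounding $E(w)$ in terms of $\|w\|_{L^{1+1/p}(D)}$, whose equality case captures exactly the least energy vorticities. Throughout write $u=\mathcal G w$, so that $-\Delta u=w$, $u\in H_0^1(D)$ and $2E(w)=\int_D|\nabla u|^2\,dx$. Let $S$ denote the best constant in the Sobolev-type inequality
\[
\|\phi\|_{L^{p+1}(D)}^2\le S^{-1}\|\nabla\phi\|_{L^2(D)}^2,\qquad \phi\in H_0^1(D),
\]
i.e.\ $S=\inf\{\|\nabla\phi\|_{L^2}^2:\phi\in H_0^1(D),\ \|\phi\|_{L^{p+1}}=1\}$. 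By the classical theory recalled in \S2.3.2 of \cite{BS}, $S$ is attained, every extremal is of constant sign, and after rescaling onto $\mathcal N$ the extremals are precisely the least energy solutions of \eqref{le}; a direct computation gives $c_p=\tfrac{p-1}{2(p+1)}S^{(p+1)/(p-1)}$, equivalently $\tfrac{2(p+1)}{p-1}c_p=S^{(p+1)/(p-1)}$.

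The key step is the inequality, valid for every $w\in L^{1+1/p}(D)$,
\[
E(w)\le \frac{1}{2S}\,\|w\|_{L^{1+1/p}(D)}^2 .
\]
To prove it, note that $2E(w)=\int_D wu\,dx\le \|w\|_{L^{1+1/p}}\|u\|_{L^{p+1}}$ by H\"older's inequality (the exponents $1+\tfrac1p$ and $p+1$ being conjugate), and then $\|u\|_{L^{p+1}}\le S^{-1/2}\|\nabla u\|_{L^2}=S^{-1/2}(2E(w))^{1/2}$ by the definition of $S$; dividing by $(2E(w))^{1/2}$ (the case $E(w)=0$ being trivial) and squaring yields the claim. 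Equality forces equality in both steps: the Sobolev step forces $u$ to be an extremal for $S$, while the H\"older step forces $wu\ge 0$ and $|w|^{1+1/p}$ proportional to $|u|^{p+1}$ a.e., i.e.\ $w=\gamma|u|^{p-1}u$ for some constant $\gamma>0$; since $w=-\Delta u$ this means $u$ solves $-\Delta u=\gamma|u|^{p-1}u$ and is a Sobolev extremal.

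Combining these facts proves both inclusions. Since $\|w\|_{L^{1+1/p}}\le\mu_p$ on $\mathcal S_p$ and $\mu_p=S^{p/(p-1)}$ (from the relation above together with \eqref{cce}), the inequality gives $E(w)\le \mu_p^2/(2S)=\tfrac{p+1}{p-1}c_p$ for every $w\in\mathcal S_p$. On the other hand, any $\tilde\omega=-\Delta u\in\mathcal W_p$ with $u\in\mathcal L_p$ lies in $\mathcal S_p$ (by \eqref{cce}) and satisfies $E(\tilde\omega)=\tfrac12\int_D|\nabla u|^2\,dx=\tfrac{p+1}{p-1}c_p$, so it realizes the bound; since $\mathcal W_p\neq\varnothing$, the supremum is attained, $M_p=\tfrac{p+1}{p-1}c_p$, and $\mathcal W_p\subseteq\{w\in\mathcal S_p:E(w)=M_p\}$. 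Conversely, if $w\in\mathcal S_p$ and $E(w)=M_p$, then the chain $M_p=E(w)\le \|w\|_{L^{1+1/p}}^2/(2S)\le \mu_p^2/(2S)=M_p$ must consist of equalities, so $\|w\|_{L^{1+1/p}}=\mu_p$ and $w$ lands in the equality case above.

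The main obstacle is the final identification in this last step: the equality case only yields $w=-\Delta u$ with $u$ solving a rescaled equation $-\Delta u=\gamma|u|^{p-1}u$ and being a Sobolev extremal, which a priori need not be normalized. I would resolve this by scaling: writing $u=\gamma^{-1/(p-1)}v$ reduces the equation to $-\Delta v=|v|^{p-1}v$ with $v$ (a rescaled Sobolev extremal) a least energy solution, so that $w=\gamma^{-1/(p-1)}(-\Delta v)$ with $-\Delta v\in\mathcal W_p$; the constraint $\|w\|_{L^{1+1/p}}=\mu_p=\|-\Delta v\|_{L^{1+1/p}}$ then forces $\gamma=1$, giving $w=-\Delta v\in\mathcal W_p$. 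Care is also needed to confirm, via the cited classical theory, that Sobolev extremals are of constant sign and correspond (up to scaling) exactly to the elements of $\mathcal L_p$, which is what makes the equality analysis land precisely in $\mathcal W_p$.
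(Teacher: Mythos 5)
Your proof is correct, and it takes a genuinely different route from the paper's. The paper works entirely on the vorticity side: it first proves attainment of the maximum of $E$ over $\mathcal S_p$ via weak sequential compactness and weak continuity of $E$ (Lemma \ref{le222}), shows maximizers saturate the norm constraint by scaling, derives the Euler--Lagrange relation $\tilde\omega=(2M_p)^{-p}\mu_p^{p+1}(\mathcal G\tilde\omega)^p$ by a first-variation computation along normalized perturbations, gets the constant sign by comparing $E(|\tilde\omega|)$ with $E(\tilde\omega)$ and the strong maximum principle (Lemma \ref{profi}), and only then identifies the multiplier through the identity $\mu_p^{1+\frac1p}=2M_p$, proved by a two-sided comparison with $c_p$ (Lemmas \ref{lem24} and \ref{lem25}). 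You instead dualize to the stream-function side: the sharp inequality $E(w)\le \frac{1}{2S}\|w\|_{L^{1+1/p}(D)}^2$ (H\"older plus the best constant $S$ of the compact embedding $H^1_0(D)\hookrightarrow L^{p+1}(D)$) gives $M_p\le \mu_p^2/(2S)$ at once, attainment is free because every element of $\mathcal W_p$ realizes the bound, and the equality analysis --- H\"older equality forcing $w=\gamma|u|^{p-1}u$ with $\gamma>0$, Sobolev equality forcing $u$ to be an extremal, and the normalization $\|w\|_{L^{1+1/p}(D)}=\mu_p$ forcing $\gamma=1$ after rescaling --- replaces the paper's first variation, sign argument, and multiplier identification in one stroke; your bookkeeping $c_p=\frac{p-1}{2(p+1)}S^{(p+1)/(p-1)}$, $\mu_p=S^{p/(p-1)}$, $2M_p=S^{(p+1)/(p-1)}=\mu_p^{1+\frac1p}$ is consistent with the paper's Lemma \ref{lem24}. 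What the paper's longer route buys is reusability: its maximizing-sequence machinery is essentially what becomes the compactness Lemma \ref{scmcs}, the engine of the stability proofs in Sections 3 and 4, so with your approach that compactness statement would still have to be proved separately; what your route buys is economy and a closed-form identification of $M_p$ without any direct existence argument. Two points you should make explicit for completeness: $M_p>0$ (since $\mathcal W_p\neq\varnothing$), so in the equality case $w\not\equiv 0$ and $u=\mathcal Gw\neq 0$, which legitimizes both the H\"older proportionality and the extremality of $u$; and $\mathcal Gw\in H^1_0(D)$ for $w\in L^{1+\frac1p}(D)$ (by the $L^p$ estimate and Sobolev embedding), so the identity $2E(w)=\|\nabla \mathcal Gw\|_{L^2(D)}^2$ underlying both steps is valid.
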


To make it clear, we divide the proof into several lemmas.

\begin{lemma}\label{le222}
$E$ attains its maximum value over $\mathcal S_p$, and any maximizer $\tilde\omega$ satisfies $\|\tilde\omega\|_{L^{1+\frac{1}{p}}(D)}=\mu_p.$
\end{lemma}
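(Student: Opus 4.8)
The plan is to obtain a maximizer by the direct method of the calculus of variations, and then to force it onto the boundary sphere of $\mathcal S_p$ by a scaling argument that exploits the quadratic homogeneity of $E$.

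Write $q:=1+\frac1p$, so that $q\in(1,2)\subset(1,+\infty)$, the space $L^q(D)$ is reflexive, and $\mathcal S_p$ is precisely the closed ball of radius $\mu_p$ in $L^q(D)$; being bounded, convex and norm-closed, it is weakly sequentially closed. Before running the direct method I would record that $M_p$ is finite and strictly positive. Finiteness follows from
\[|E(w)|\leq \tfrac12\|w\|_{L^q(D)}\|\mathcal Gw\|_{L^{p+1}(D)}\leq C\mu_p^2,\]
where $p+1$ is the conjugate exponent of $q$ and the last inequality uses the standard $L^q$ elliptic estimate together with Sobolev embedding for $\mathcal G$. Positivity holds because $E(w)=\tfrac12\int_D|\nabla\mathcal Gw|^2dx>0$ for every $w\not\equiv0$: if this integral vanished, then $\mathcal Gw$ would be constant, and its zero Dirichlet boundary value would force $\mathcal Gw\equiv0$, hence $w=-\Delta\mathcal Gw=0$. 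With $0<M_p<+\infty$ established, I take a maximizing sequence $\{w_n\}\subset\mathcal S_p$, extract by reflexivity a subsequence $w_{n_k}\rightharpoonup\tilde\omega$ weakly in $L^q(D)$, use the weak closedness of $\mathcal S_p$ to get $\tilde\omega\in\mathcal S_p$, and invoke the weak sequential continuity of $E$ on $L^q(D)$ (recorded just after Yudovich's theorem) to conclude $E(\tilde\omega)=\lim_k E(w_{n_k})=M_p$. Thus $\tilde\omega$ is a maximizer.

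For the boundary statement I would argue by contradiction. Suppose some maximizer $\tilde\omega$ satisfied $\|\tilde\omega\|_{L^q(D)}<\mu_p$. Since $\mathcal G$ is linear, $E$ is homogeneous of degree two, i.e.\ $E(t\tilde\omega)=t^2E(\tilde\omega)$ for every $t>0$. Choosing $t>1$ close enough to $1$ keeps $\|t\tilde\omega\|_{L^q(D)}=t\|\tilde\omega\|_{L^q(D)}\leq\mu_p$, so $t\tilde\omega\in\mathcal S_p$, yet $E(t\tilde\omega)=t^2M_p>M_p$ because $M_p>0$. This contradicts the definition of $M_p$, so every maximizer must satisfy $\|\tilde\omega\|_{L^q(D)}=\mu_p$.

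The existence half is routine once the two structural facts are in place, namely the weak compactness of a ball in the reflexive space $L^q(D)$ and the weak sequential continuity of $E$. The only genuinely load-bearing point, and the conceptual core of the boundary conclusion, is the strict positivity $M_p>0$ that powers the scaling argument; this rests on the implication $E(w)=0\Rightarrow w=0$, which I verify through the zero Dirichlet boundary condition satisfied by $\mathcal Gw$.
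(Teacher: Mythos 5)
Your proof is correct and follows essentially the same route as the paper: the direct method (weak sequential compactness of the ball in the reflexive space $L^{1+\frac{1}{p}}(D)$ plus the weak sequential continuity of $E$) for existence, and a scaling argument exploiting the quadratic homogeneity of $E$ for the boundary conclusion. The only difference is cosmetic: the paper rescales a putative interior maximizer all the way to the sphere via $\mu_p\tilde\omega/\|\tilde\omega\|_{L^{1+\frac{1}{p}}(D)}$, whereas you scale by $t>1$ close to $1$, and you helpfully make explicit the strict positivity $M_p>0$ (via $E(w)=\tfrac12\int_D|\nabla\mathcal Gw|^2dx>0$ for $w\not\equiv 0$) that the paper leaves implicit when it tacitly excludes $\tilde\omega=0$ and uses $E(\tilde\omega)>0$ for its strict inequality.
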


\begin{proof}
First by  H\"older's inequality, $L^p$ estimate and Sobolev embedding theorem, it holds for any
$w\in \mathcal S_p$ that
\[|E(w)|\leq \frac{1}{2}\|w\|_{L^{1+\frac{1}{p}}(D)}\|\mathcal Gw\|_{L^{p+1}(D)}
\leq C_1\|w\|_{L^{1+\frac{1}{p}}(D)}\|\mathcal Gw\|_{L^\infty(D)}\leq C_2\|w\|^2_{L^{1+\frac{1}{p}}(D)}\leq C_2\mu_p^2,\]
where $C_1, C_2$ are positive constants depending only on $p$ and $D$. Therefore
\[\sup_{w\in\mathcal S_p}E(w)<+\infty.\]
Now let $\{w^n\}_{n=1}^{+\infty}\subset\mathcal S_p$ be a sequence such that
\[\lim_{n\to+\infty}E(w^n)=\sup_{w\in\mathcal S_p}E(w).\]
Since $\mathcal S_p$ is obviously a bounded and weakly closed subset of $L^{1+\frac{1}{p}}(D)$, we can choose a subsequence $\{w^{n_j}\}_{j=1}^{+\infty}$ such that
$w^{n_j}$ converges to some $\tilde\omega\in \mathcal S_p$ weakly  in $L^{1+\frac{1}{p}}(D)$. Taking into account the fact that $E$ is weakly sequentially continuous in $L^{1+\frac{1}{p}}(D)$, we get
\[\sup_{w\in\mathcal S_p}E(w)=\lim_{j\to+\infty}E(w^{n_j})=E(\tilde\omega),\]
which means $E$ attains its maximum value at $\tilde\omega$.

Now we verify that any maximizer $\tilde\omega$ satisfies $\|\tilde\omega\|_{L^{1+\frac{1}{p}}(D)}=\mu_p$. Suppose otherwise that
\[0<\|\tilde\omega\|_{L^{1+\frac{1}{p}}(D)}<\mu_p,\]
 then it is easy to check that
$$\frac{\mu_p\tilde\omega}{\|\tilde\omega\|_{L^{1+\frac{1}{p}}(D)}}\in\mathcal S_p$$
and
\[E\left(\frac{\mu_p\tilde\omega}{\|\tilde\omega\|_{L^{1+\frac{1}{p}}(D)}}\right)=\frac{\mu_p^2}{\|\tilde\omega\|^2_{L^{1+\frac{1}{p}}(D)}}
E(\tilde\omega)>E(\tilde\omega),\]
which is a contradiction.

\end{proof}

\begin{lemma}\label{profi}
For any maximizer $\tilde\omega$ of $E$ over $\mathcal S_p$, either $\tilde \omega>0$ a.e. in $D$ or $\tilde \omega<0$ a.e. in $D$. If $\tilde \omega>0$ a.e. in $D$, then
\[\tilde\omega=(2M_p)^{-{p}}\mu_p^{p+1}(\mathcal G\tilde\omega)^p  \quad\mbox{ a.e. in }  D.\] If $\tilde \omega<0$ a.e. in $D$, then
\[\tilde\omega=-(2M_p)^{-{p}}\mu_p^{p+1}(-\mathcal G\tilde\omega)^p\quad \mbox{ a.e. in } D.\]

\end{lemma}
\begin{proof}
First we show that for any maximizer $\tilde\omega$, it holds that
\begin{equation}\label{epen}
\tilde\omega\geq 0\,\,\mbox{ a.e. in}\,\, D\,\,\mbox{ or }\,\,\tilde\omega\leq 0 \,\,\mbox{ a.e. in}\,\, D.
\end{equation}
Denote $\tilde\omega^+=\max\{\tilde\omega,0\}, \tilde\omega^-=-\min\{\tilde\omega,0\}$. Obviously $\tilde\omega=\tilde\omega^+-\tilde\omega^-$ and $|\tilde\omega|=\tilde\omega^++\tilde\omega^-.$ We compare $E(\tilde\omega)$ and $E(|\tilde\omega|)$ as follows
\begin{equation}\label{242}
E(|\tilde\omega|)-E(\tilde\omega)=\int_D\tilde\omega^+\mathcal G\tilde\omega^-+\tilde\omega^-\mathcal G \tilde\omega^+dx\geq 0.
\end{equation}
By the strong maximum principle, the inequality in \eqref{242} is an equality if and only $\tilde\omega^+= 0$ a.e. in $D$ or $\tilde\omega^-= 0$ a.e. in $D$, which proves \eqref{epen}.

To continue, we first assume that $\tilde\omega\geq 0$ a.e. in $D$.
Below we show that in this case
\begin{equation}
\tilde \omega>0 \quad\mbox{ a.e. in } D,
 \end{equation}
\begin{equation}
\tilde\omega=(2M_p)^{-p}\mu_p^{p+1}(\mathcal G\tilde\omega)^p\quad \mbox{ a.e. in }D.
\end{equation}
  To this end, for any  $\phi\in C_c^\infty(D)$, define a family of test functions
\[\omega^\varepsilon:=\frac{\tilde\omega+\varepsilon\phi}{\|\tilde\omega+\varepsilon\phi\|_{L^{1+\frac{1}{p}}(D)}},\]
where $\varepsilon\in\mathbb R$ is small in absolute value such that $\|\tilde \omega+\varepsilon\phi\|_{L^{1+\frac{1}{p}}(D)}>0$.
 Since $\tilde\omega$ is a maximizer and $\omega^\varepsilon|_{\varepsilon=0}=\tilde\omega$, we have
\[\frac{d}{d\varepsilon}E(\omega^\varepsilon)\bigg|_{\varepsilon=0}=0.\]
On the other hand, we can calculate $\frac{d}{d\varepsilon}E(\omega^\varepsilon)\big|_{\varepsilon=0}$ as follows
\begin{equation*}
\begin{split}
\frac{d}{d\varepsilon}E(\omega^\varepsilon)\bigg|_{\varepsilon=0}
&=\frac{1}{2}\frac{d}{d\varepsilon}\int_D\frac{\tilde\omega+\varepsilon\phi}{\|\tilde\omega+\varepsilon\phi\|_{L^{1+\frac{1}{p}}(D)}}\frac{\mathcal G(\tilde\omega+\varepsilon\phi)}{\|\tilde\omega+\varepsilon\phi\|_{L^{1+\frac{1}{p}}(D)}}dx\bigg|_{\varepsilon=0}\\
&=-\mu_p^{-3-\frac{1}{p}}\int_D\tilde\omega\mathcal G\tilde\omega dx\int_D\tilde\omega^\frac{1}{p}\phi dx+\mu^{-2}_p\int_D\phi\mathcal G\tilde\omega dx\\
&=-2M_p\mu_p^{-3-\frac{1}{p}}\int_D\tilde\omega^\frac{1}{p}\phi dx+\mu^{-2}_p\int_D\phi\mathcal G\tilde\omega dx.
\end{split}
\end{equation*}
Therefore we obtain
\[\int_D\tilde\omega^{\frac{1}{p}}\phi dx=(2M_p)^{-1}\mu_p^{1+\frac{1}{p}}\int_D\phi\mathcal G\tilde\omega dx,\,\,\forall\,\phi\in C_c^\infty(D),\]
which implies
\begin{equation}\label{bom}
\tilde\omega=(2M_p)^{-p}\mu_p^{p+1}\mathcal (G\tilde\omega)^p\,\,\mbox{ a.e. in } D.
\end{equation}
Finally, by the strong maximum principle, we have $\tilde \omega>0$ a.e. in $D$.

For the case of $\tilde \omega\leq 0$ a.e. in $D$, the proof is almost identical as above.
\end{proof}

\begin{lemma}\label{lem24}
$\mu_p^{1+\frac{1}{p}}=2M_p.$
\end{lemma}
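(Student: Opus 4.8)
The plan is to connect the variational problem back to the Lane--Emden energy by exploiting the precise profile of a maximizer obtained in Lemma~\ref{profi}. Let $\tilde\omega$ be a maximizer of $E$ over $\mathcal S_p$; by Lemma~\ref{le222} we know $\|\tilde\omega\|_{L^{1+\frac1p}(D)}=\mu_p$ and $E(\tilde\omega)=M_p$, and by Lemma~\ref{profi} we may assume (after possibly replacing $\tilde\omega$ by $-\tilde\omega$, which does not change $E$) that $\tilde\omega>0$ a.e.\ and
\begin{equation}\label{eq:profile}
\tilde\omega=(2M_p)^{-p}\mu_p^{p+1}(\mathcal G\tilde\omega)^p\quad\text{a.e.\ in }D.
\end{equation}
Set $u:=\mathcal G\tilde\omega$, so that $-\Delta u=\tilde\omega>0$ in $D$ with $u=0$ on $\partial D$, and $u>0$ in $D$.

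Next I would test the profile equation against $u$ itself to produce two algebraic relations among $M_p$, $\mu_p$, and the norms of $u$. Multiplying \eqref{eq:profile} by $u=\mathcal G\tilde\omega$ and integrating gives
\begin{equation}\label{eq:rel1}
\int_D\tilde\omega\,\mathcal G\tilde\omega\,dx=2M_p,\qquad\text{i.e.}\qquad (2M_p)^{-p}\mu_p^{p+1}\int_D u^{p+1}dx=2M_p.
\end{equation}
On the other hand, raising \eqref{eq:profile} to the power $1+\frac1p$ and integrating recovers the norm constraint: since $\|\tilde\omega\|_{L^{1+\frac1p}}^{1+\frac1p}=\mu_p^{1+\frac1p}$, one gets $\bigl((2M_p)^{-p}\mu_p^{p+1}\bigr)^{1+\frac1p}\int_D u^{p+1}dx=\mu_p^{1+\frac1p}$. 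These two identities both involve the single unknown quantity $\int_D u^{p+1}dx$, and dividing one by the other (or eliminating that integral) should collapse to a clean relation in $M_p$ and $\mu_p$ alone. The expected outcome of this elimination is precisely $\mu_p^{1+\frac1p}=2M_p$.

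Concretely, from \eqref{eq:rel1} I would solve $\int_D u^{p+1}dx=(2M_p)^{p+1}\mu_p^{-(p+1)}$, substitute into the norm identity, and simplify the resulting power relation; the exponents are arranged so that everything cancels down to the desired equality. I should double-check the bookkeeping by an alternative route: since $u$ solves $-\Delta u=\kappa\,u^p$ with $\kappa=(2M_p)^{-p}\mu_p^{p+1}$, a rescaling $v=\kappa^{1/(p-1)}u$ solves the genuine Lane--Emden equation \eqref{le}, and comparing the Nehari-level identities for $v$ (which tie $\|v\|_{L^{p+1}}$ to $c_p$, hence to $\mu_p$ via \eqref{cce}) against the definitions of $M_p$ and $\mu_p$ gives an independent confirmation. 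This cross-check is mainly a guard against arithmetic slips.

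The only genuine subtlety, rather than an obstacle, is ensuring that a maximizer actually exists and has strictly positive sign so that the profile identity \eqref{eq:profile} is available; but this is exactly the content of Lemmas~\ref{le222} and~\ref{profi}, which I am entitled to invoke, so the proof reduces to careful exponent arithmetic. The main thing to be careful about is tracking the powers of $2M_p$ and $\mu_p$ through the two test-function identities without error, since a single misplaced exponent would give a spurious relation; organizing the computation as ``eliminate $\int_D u^{p+1}$ between \eqref{eq:rel1} and the normalization'' keeps this transparent and delivers $\mu_p^{1+\frac1p}=2M_p$ directly.
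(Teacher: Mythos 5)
There is a genuine gap: your main computation is circular and collapses to a tautology. The multiplier $\kappa=(2M_p)^{-p}\mu_p^{p+1}$ appearing in the profile equation of Lemma \ref{profi} was itself determined by exactly the two facts you propose to test against, namely $\int_D\tilde\omega\,\mathcal G\tilde\omega\,dx=2M_p$ and $\|\tilde\omega\|_{L^{1+\frac1p}(D)}=\mu_p$ (that is how the Lagrange-multiplier computation in Lemma \ref{profi} produces the constant). Consequently your two identities, $\kappa\int_Du^{p+1}dx=2M_p$ and $\kappa^{1+\frac1p}\int_Du^{p+1}dx=\mu_p^{1+\frac1p}$, are automatically consistent for \emph{every} value of $M_p$ and $\mu_p$, and eliminating $\int_Du^{p+1}dx$ yields no relation at all. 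Indeed, solving the first for $\int_Du^{p+1}dx=(2M_p)^{p+1}\mu_p^{-(p+1)}$ and substituting into the second gives
\begin{equation*}
\kappa^{1+\frac1p}\int_Du^{p+1}dx=(2M_p)^{-(p+1)}\mu_p^{\frac{(p+1)^2}{p}}\,(2M_p)^{p+1}\mu_p^{-(p+1)}=\mu_p^{1+\frac1p},
\end{equation*}
identically: every power of $2M_p$ cancels, so the ``clean relation'' you expect is just $\mu_p^{1+\frac1p}=\mu_p^{1+\frac1p}$. This is not an exponent-bookkeeping issue that care can repair; any algebraic manipulation of the profile equation against these two constraints alone must be vacuous, precisely because $\kappa$ encodes them.

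Your fallback ``cross-check'' — rescaling $v=\kappa^{1/(p-1)}u$ so that $v$ solves the genuine Lane--Emden equation \eqref{le} and comparing energy levels via \eqref{cce} — is in fact the paper's actual argument, but it is not a mere confirmation and it only yields \emph{one} inequality. For a general nontrivial solution $v$ of \eqref{le}, the Nehari identity gives $\mathcal I(v)=\bigl(\tfrac12-\tfrac1{p+1}\bigr)\|v\|_{L^{p+1}(D)}^{p+1}\geq c_p$, with equality only if $v$ is a least energy solution, which is unknown at this stage; tracking the scaling through \eqref{cce} this produces $2M_p\leq\mu_p^{1+\frac1p}$ and nothing more. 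The missing idea is the reverse inequality, which the paper obtains by a separate competitor argument: take $\tilde\omega\in\mathcal W_p$ with $\tilde\omega>0$, so that $\tilde\omega=(\mathcal G\tilde\omega)^p$ a.e.\ and $\|\tilde\omega\|_{L^{1+\frac1p}(D)}=\mu_p$ by \eqref{cce}; since $\mathcal W_p\subset\mathcal S_p$, the definition of $M_p$ gives $2M_p\geq\int_D\tilde\omega\,\mathcal G\tilde\omega\,dx=\int_D\tilde\omega^{1+\frac1p}dx=\mu_p^{1+\frac1p}$. Without this second half your argument cannot close, so as written the proposal does not prove the lemma.
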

\begin{proof}
First we show that
 \begin{equation}\label{ellig}
 2M_p\leq \mu_p^{1+\frac{1}{p}}.
 \end{equation}
 Let $\tilde \omega$ be a positive maximizer of $E$ over $\mathcal S_p.$ By Lemma \ref{profi}, $\tilde u:=\mathcal G\tilde \omega$ satisfies
\[-\Delta \tilde u=\lambda_p \tilde u^p,\,\,\lambda_p=(2M_p)^{-p}\mu_p^{p+1}.\]
Obviously $\bar u:=\lambda_p^{\frac{1}{p-1}}\tilde u$ is a solution of \eqref{le}. By the definition of least energy solutions, we have
 \begin{equation}
 \mathcal I(\bar u)\geq c_p,
  \end{equation}
where $\mathcal I$ and $c_p$ are defined by \eqref{ccb} and \eqref{ccd} in Section 1. A direct computation gives
 $$(\frac{1}{2}-\frac{1}{p+1})(2M_p)^{\frac{1+p}{1-p}}\mu_p^{\frac{2(p+1)}{p-1}}\geq c_p.$$
Taking into account the relation \eqref{cce}, we obtain \eqref{ellig}.

Now we prove the inverse inequality
 \begin{equation}\label{ellig2}
2M_p\geq \mu_p^{1+\frac{1}{p}}.
 \end{equation}
  Choose $\tilde\omega\in \mathcal W_p$ such that $\tilde\omega>0$ in $D$.  Obviously $\tilde \omega$ satisfies
\begin{equation}\label{cc11}
\tilde \omega=(\mathcal G\tilde\omega)^p\,\, \mbox{ a.e. in }D,
\end{equation}
\begin{equation}\label{cc12}
\|\tilde\omega\|_{L^{1+\frac{1}{p}}(D)}=\mu_p.
\end{equation}
Then
\begin{equation}\label{cc14}
\int_D\tilde\omega\mathcal G\tilde\omega dx=\int_D\tilde\omega^{1+\frac{1}{p}}=\mu_p^{1+\frac{1}{p}}.
\end{equation}
On the other hand, by the definition of $M_p$, we have
\begin{equation}\label{cc13}
\int_D\tilde\omega\mathcal G\tilde\omega dx\leq 2M_p.
\end{equation}
The desired inverse inequality \eqref{ellig2} follows from \eqref{cc13} and \eqref{cc14}.

\end{proof}

\begin{lemma}\label{lem25}
Let $\tilde \omega \in L^{1+\frac{1}{p}}(D)$ satisfying $\tilde\omega\geq 0$ a.e. in $D$. Then $\tilde\omega\in\mathcal W_p$ if and only if
$\tilde\omega=(\mathcal G\tilde\omega)^p$ a.e. in $D$ and $\|\tilde \omega\|_{L^{1+\frac{1}{p}}(D)}=\mu_p.$

\end{lemma}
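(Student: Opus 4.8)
The plan is to prove both implications by moving between the vorticity $\tilde\omega$ and its potential $u:=\mathcal G\tilde\omega$, which always solves $-\Delta u=\tilde\omega$ in $D$ with $u=0$ on $\partial D$. The forward direction is essentially bookkeeping, while the reverse direction reduces to a single energy computation that identifies $u$ as a least energy solution.

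\emph{Necessity.} Suppose $\tilde\omega\in\mathcal W_p$ with $\tilde\omega\geq 0$. By definition $\tilde\omega=-\Delta v$ for some $v\in\mathcal L_p$. Least energy solutions have constant sign, and since $\tilde\omega=|v|^{p-1}v\geq 0$ is not identically zero (its $L^{1+1/p}$ norm equals $\mu_p>0$ by \eqref{cce}), $v$ must be positive, so $\tilde\omega=v^p$. Because $v$ solves $-\Delta v=\tilde\omega$ with zero boundary data, uniqueness of the Green's operator gives $\mathcal G\tilde\omega=v$, whence $\tilde\omega=v^p=(\mathcal G\tilde\omega)^p$; the norm identity $\|\tilde\omega\|_{L^{1+1/p}(D)}=\mu_p$ is exactly \eqref{cce}. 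No new estimate is needed here.

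\emph{Sufficiency.} Now assume $\tilde\omega\geq 0$, $\tilde\omega=(\mathcal G\tilde\omega)^p$ and $\|\tilde\omega\|_{L^{1+1/p}(D)}=\mu_p$, and set $u:=\mathcal G\tilde\omega$. Since $\tilde\omega\geq 0$ and $\tilde\omega\not\equiv 0$, the strong maximum principle yields $u>0$ in $D$; with $\tilde\omega=u^p$ (after the routine elliptic bootstrap for regularity) this says $u$ is a positive solution of \eqref{ple}, hence a solution of \eqref{le} lying on the Nehari manifold $\mathcal N$ (test the weak formulation with $u$). It remains only to show $\mathcal I(u)=c_p$, i.e. that $u$ is a \emph{least energy} solution. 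For any $u\in\mathcal N$ one has $\mathcal I(u)=\tfrac{p-1}{2(p+1)}\int_D u^{p+1}\,dx$, and the relation $\tilde\omega=u^p$ together with the norm constraint gives
\[\int_D u^{p+1}\,dx=\int_D \tilde\omega^{1+\frac1p}\,dx=\|\tilde\omega\|_{L^{1+1/p}(D)}^{1+\frac1p}=\mu_p^{1+\frac1p}.\]
Reading off $\mu_p^{1+1/p}=2c_p(p+1)/(p-1)$ from \eqref{cce} then gives $\mathcal I(u)=c_p$, so $u\in\mathcal L_p$ and $\tilde\omega=-\Delta u\in\mathcal W_p$.

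I expect no genuine analytic obstacle; the only points needing care are the exponent bookkeeping and the role of Lemma \ref{lem24}. Indeed, an equivalent route for sufficiency is to observe that $E(\tilde\omega)=\tfrac12\int_D\tilde\omega^{1+1/p}\,dx=\tfrac12\mu_p^{1+1/p}=M_p$ by Lemma \ref{lem24}, so $\tilde\omega$ is a positive maximizer of $E$ over $\mathcal S_p$; it is precisely the identity $\mu_p^{1+1/p}=2M_p$ that collapses the Euler--Lagrange equation $\tilde\omega=(2M_p)^{-p}\mu_p^{p+1}(\mathcal G\tilde\omega)^p$ of Lemma \ref{profi} into the clean form appearing in the statement. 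Matching these normalizing constants correctly is the main thing to watch.
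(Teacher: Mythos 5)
Your proof is correct and follows essentially the same route as the paper: the sufficiency argument reduces to showing $\mathcal I(\mathcal G\tilde\omega)=c_p$ via $\int_D u^{p+1}\,dx=\int_D\tilde\omega^{1+\frac1p}\,dx=\mu_p^{1+\frac1p}=\frac{2c_p(p+1)}{p-1}$, which is exactly the paper's computation (your Nehari identity $\mathcal I(u)=\frac{p-1}{2(p+1)}\int_D u^{p+1}\,dx$ is the paper's integration by parts in disguise), while your spelled-out necessity argument fills in what the paper dismisses as obvious. The extra care you take with the elliptic bootstrap and with $\tilde\omega\not\equiv 0$ is appropriate but does not change the substance.
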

\begin{proof}
The ``only if" part is obvious. Below we prove the ``if" part. Let $\tilde \omega \in L^{1+\frac{1}{p}}(D)$ satisfying
\[
\tilde\omega\geq 0\,\, \mbox{ a.e. in } D,\quad  \tilde\omega=(\mathcal G\tilde\omega)^p\,\,\mbox{ a.e. in }D,\quad  \|\tilde \omega\|_{L^{1+\frac{1}{p}}(D)}=\mu_p.
\]
 Denote $\tilde u=\mathcal G\tilde\omega$. We compute $\mathcal I(\tilde u)$ as follows
\begin{align*}
\mathcal I(\tilde u)&=\frac{1}{2}\int_D|\nabla \tilde u|^2dx -\frac{1}{p+1}\int_D|\tilde u|^{p+1}dx\\
&=\frac{1}{2}\int_D\tilde\omega\mathcal G\tilde\omega dx-\frac{1}{p+1}\int_D\tilde\omega^{1+\frac{1}{p}}dx\\
&=\left(\frac{1}{2}-\frac{1}{p+1}\right)\int_D\tilde\omega^{1+\frac{1}{p}}dx\\
&=\left(\frac{1}{2}-\frac{1}{p+1}\right)\mu_p^{1+\frac{1}{p}}.
\end{align*}
Note that in the second equality we have used  integration by parts. Taking into account the relation (see \eqref{cce})
\[\mu_p=\left(\frac{2c_p(p+1)}{p-1}\right)^{\frac{p}{p+1}},\]
we get $\mathcal I(\tilde u)=c_p,$ which means that $\tilde u$ is a least energy solution of \eqref{le}, and thus $\tilde \omega\in\mathcal W_p.$
\end{proof}

\begin{proof}[Proof of Proposition \ref{prop1}] It  is an immediate consequence of Lemmas \ref{profi}-\ref{lem25}.
\end{proof}

\section{Proofs of Theorems \ref{thm1} and \ref{thm2}}
In this section we prove  Theorems \ref{thm1} and \ref{thm2}. The proofs are mostly based on the variational characterization of $\mathcal W_p$ proved in Proposition \ref{prop1} and conservative properties of the vorticity equation stated in Yudovich's theorem.

To make the proof clear, we give several lemmas first.
\begin{lemma}\label{301}
$\mathcal W_p$ is a bounded subset of $L^\infty(D).$
\end{lemma}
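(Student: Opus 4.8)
The plan is to exploit the pointwise profile equation and the common $L^{1+\frac1p}$ norm enjoyed by every element of $\mathcal W_p$, and then run a one-step elliptic bootstrap. By Proposition \ref{prop1} every $\tilde\omega\in\mathcal W_p$ is a maximizer of $E$ over $\mathcal S_p$, so Lemmas \ref{profi}, \ref{lem24} and \ref{lem25} apply: $\tilde\omega$ has constant sign, satisfies $\|\tilde\omega\|_{L^{1+\frac1p}(D)}=\mu_p$, and obeys the pointwise identity $|\tilde\omega|=|\mathcal G\tilde\omega|^p$ a.e.\ in $D$ (namely $\tilde\omega=(\mathcal G\tilde\omega)^p$ when $\tilde\omega>0$ and $\tilde\omega=-(-\mathcal G\tilde\omega)^p$ when $\tilde\omega<0$). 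Consequently it suffices to produce a bound on $\|\mathcal G\tilde\omega\|_{L^\infty(D)}$ that depends only on $\mu_p$, $p$ and $D$, since then $\|\tilde\omega\|_{L^\infty(D)}=\|\mathcal G\tilde\omega\|_{L^\infty(D)}^p$ is controlled uniformly over $\mathcal W_p$.

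First I would apply the Calder\'on--Zygmund $L^r$ estimate to $u:=\mathcal G\tilde\omega$, which solves $-\Delta u=\tilde\omega$ with zero Dirichlet data. Taking $r=1+\frac1p$, this yields $\|u\|_{W^{2,1+\frac1p}(D)}\le C\|\tilde\omega\|_{L^{1+\frac1p}(D)}=C\mu_p$, with $C$ depending only on $p$ and $D$. The crucial feature of the planar setting is that $1+\frac1p>1=\tfrac n2$ with $n=2$, so $2-\tfrac{n}{r}=\tfrac{2}{p+1}>0$ and the Morrey--Sobolev embedding $W^{2,1+\frac1p}(D)\hookrightarrow C^{0,\frac{2}{p+1}}(\bar D)\hookrightarrow L^\infty(D)$ holds on the smooth bounded domain $D$. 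Hence $\|u\|_{L^\infty(D)}\le C'\mu_p$ with $C'$ again depending only on $p$ and $D$. No iteration is needed because in two dimensions a single gain of two derivatives already reaches $L^\infty$. Feeding this back through the profile identity gives $\|\tilde\omega\|_{L^\infty(D)}=\|u\|_{L^\infty(D)}^p\le (C'\mu_p)^p$, a bound independent of the particular $\tilde\omega\in\mathcal W_p$, which is exactly the claim.

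The argument is essentially routine, so the point to watch is uniformity: the constants in both the elliptic estimate and the Sobolev embedding must depend only on $p$ and $D$ and not on the individual solution, which is automatic for these classical estimates on a fixed smooth bounded domain. The only genuine input beyond standard regularity theory is the already-established structure of $\mathcal W_p$ from Section 2 — all its elements share the $L^{1+\frac1p}$ norm $\mu_p$ and satisfy the pointwise profile equation. I expect this last point to be the real crux rather than the regularity: without the identity $|\tilde\omega|=|\mathcal G\tilde\omega|^p$ one could still bound $\mathcal G\tilde\omega$ in $L^\infty$, but there would be no way to transfer that control to $\tilde\omega$ itself.
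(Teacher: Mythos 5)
Your proof is correct and follows essentially the same route as the paper: the boundedness of $\mathcal G$ from $L^{1+\frac{1}{p}}(D)$ to $L^\infty(D)$ (via the $L^p$ estimate and Sobolev embedding, which you spell out as Calder\'on--Zygmund plus Morrey), combined with the pointwise identity $|\tilde\omega|=|\mathcal G\tilde\omega|^p$ and the uniform norm $\|\tilde\omega\|_{L^{1+\frac{1}{p}}(D)}=\mu_p$. The only cosmetic difference is that you route the profile identity through Proposition \ref{prop1} and the lemmas of Section 2, whereas it follows directly from the definition of $\mathcal W_p$ (any $w=-\Delta u$ with $u$ solving \eqref{le} satisfies $w=|\mathcal Gw|^{p-1}\mathcal Gw$), so that detour is unnecessary but harmless.
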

\begin{proof}
First recall that $\mathcal W_p$ is bounded in $L^{1+\frac{1}{p}}(D)$. By $L^p$ estimate and Sobolev embedding theorem  $\mathcal G$ is a bounded linear operator from $L^{1+\frac{1}{p}}(D)$ to $L^\infty(D)$, thus for any $w\in\mathcal W_p$ it holds that
\[\|\mathcal Gw\|_{L^\infty(D)}\leq C\]
for some $C>0$ depending only on $p$ and $D$. Taking into account the fact that any $w\in\mathcal W_p$ satisfies \eqref{le}, we obtain
\[\|w\|_{L^\infty(D)}=\||\mathcal Gw|^{p-1}\mathcal G\omega\|_{L^\infty(D)}\leq C^p.\]
This means that $\mathcal W_p$ is bounded in $L^\infty(D).$
\end{proof}

\begin{lemma}\label{ufcu}
Let $s\in(1,+\infty]$ be fixed. Then there exists a positive number C, depending only on $s$ and $D$, such that
\[\|w\|_E\leq C\|w\|_{L^s(D)},\,\,\forall\,w\in L^s(D).\]
\end{lemma}
\begin{proof}
Without loss of generality, assume that $s\in(1,2).$  Then
 \[\|w\|_E=\|\nabla\mathcal Gw\|_{L^2(D)}\leq C\|\nabla\mathcal Gw\|_{W^{1,s}(D)}\leq C\|\mathcal Gw\|_{W^{2,s}(D)}\leq C\|w\|_{L^s(D)},\]
 where $C>0$ depends only  on $s$ and $D$. Note that  the first inequality follows from the Sobolev embedding $W^{1,s}(D)\hookrightarrow L^2(D)$, and the third inequality follows from  standard elliptic regularity theory.
  \end{proof}


The following lemma provides the necessary compactness   for the proofs of Theorems \ref{thm1} and \ref{thm2}.

\begin{lemma}\label{scmcs}
Let $\{w_n\}_{n=1}^{+\infty}\subset \mathcal S_p$ be a sequence satisfying
\[\lim_{n\to+\infty}E(w_n)=M_p.\]
Then there exist a subsequence $\{w_{n_j}\}_{j=1}^{+\infty}$ and some $\eta\in \mathcal W_p$ such that $w_{n_j}$ converges strongly to $\eta$ in $L^{1+\frac{1}{p}}(D)$ as $j\to+\infty$.
\end{lemma}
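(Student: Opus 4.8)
The plan is to extract compactness from the combination of two facts already at hand: the bound $\|w_n\|_{L^{1+1/p}(D)}\le\mu_p$ gives weak precompactness in the reflexive space $L^{1+1/p}(D)$, while the variational characterization of Proposition \ref{prop1} identifies the only possible weak limit as a maximizer, i.e.\ an element of $\mathcal W_p$. The genuine work is to upgrade the weak convergence to strong convergence, and here the decisive structural feature is that $\mathcal S_p$ is the \emph{unit ball} (up to scaling) of a strictly convex space, so a maximizing sequence that converges weakly to a point of norm $\mu_p$ must in fact converge in norm.

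First I would pass to a subsequence (not relabeled) so that $w_n\rightharpoonup\eta$ weakly in $L^{1+1/p}(D)$ for some $\eta$; since $\mathcal S_p$ is bounded, convex, and closed, hence weakly closed, we have $\eta\in\mathcal S_p$. Because $E$ is weakly sequentially continuous on $L^{1+1/p}(D)$ (as recalled in the introduction), it follows that
\[
E(\eta)=\lim_{n\to+\infty}E(w_n)=M_p,
\]
so $\eta$ is a maximizer of $E$ over $\mathcal S_p$. By Proposition \ref{prop1} this means $\eta\in\mathcal W_p$; in particular, by Lemma \ref{le222}, $\|\eta\|_{L^{1+1/p}(D)}=\mu_p$.

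The remaining and main step is to promote weak convergence to strong convergence. For this I would use the standard fact that $L^{1+1/p}(D)$, being an $L^q$ space with $1<q=1+1/p<+\infty$, is uniformly convex, hence has the Radon–Riesz (Kadec–Klee) property: if $w_n\rightharpoonup\eta$ weakly and $\|w_n\|\to\|\eta\|$, then $w_n\to\eta$ strongly. The weak convergence is already established, so it suffices to verify the convergence of norms. We have $\|\eta\|_{L^{1+1/p}(D)}=\mu_p$ from the previous paragraph; on the other hand each $w_n\in\mathcal S_p$ gives $\limsup_n\|w_n\|_{L^{1+1/p}(D)}\le\mu_p$, while weak lower semicontinuity of the norm yields $\mu_p=\|\eta\|_{L^{1+1/p}(D)}\le\liminf_n\|w_n\|_{L^{1+1/p}(D)}$. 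Combining these forces $\lim_n\|w_n\|_{L^{1+1/p}(D)}=\mu_p=\|\eta\|_{L^{1+1/p}(D)}$, and the Radon–Riesz property then delivers $w_n\to\eta$ strongly in $L^{1+1/p}(D)$, as desired.

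Finally, the compactness of $\mathcal W_p$ in $L^{1+1/p}(D)$ is an immediate corollary: any sequence in $\mathcal W_p$ is a maximizing sequence for $E$ over $\mathcal S_p$ (since every element of $\mathcal W_p$ attains the value $M_p$ by Proposition \ref{prop1}), so the argument just given produces a subsequence converging strongly to some $\eta\in\mathcal W_p$. The step I expect to be the crux is the norm-convergence argument feeding the Radon–Riesz property; the weak compactness and the identification of the limit are routine once Proposition \ref{prop1} is invoked. An alternative that avoids citing uniform convexity would be to note $w_n-\eta\rightharpoonup0$ and expand $\|w_n-\eta\|$ via the strict convexity inequality directly, but the Radon–Riesz route is cleanest and self-contained given that the exponent lies strictly between $1$ and $+\infty$.
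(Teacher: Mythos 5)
Your proof is correct and follows essentially the same route as the paper's: extract a weakly convergent subsequence from the weakly compact set $\mathcal S_p$, use weak sequential continuity of $E$ together with Proposition \ref{prop1} and Lemma \ref{le222} to identify the limit as an element of $\mathcal W_p$ with norm exactly $\mu_p$, and then combine weak convergence with convergence of norms to get strong convergence. The only difference is presentational: you name the Radon--Riesz property of the uniformly convex space $L^{1+1/p}(D)$ explicitly and spell out the compactness corollary, both of which the paper leaves implicit.
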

\begin{proof}
Since $\mathcal S_p$ is a weakly sequentially compact subset of $L^{1+\frac{1}{p}}(D)$, there exist a subsequence $\{w_{n_j}\}_{j=1}^{+\infty}$ and some $\eta\in \mathcal S_p$ such that  as $j\to+\infty$
\begin{equation}\label{wc55}
w_{n_j}\rightharpoonup\eta\,\, \mbox { in } L^{1+\frac{1}{p}}(D).
\end{equation}
From \eqref{wc55}, we get
\begin{equation}\label{x100}
\|\eta\|_{L^{1+\frac{1}{p}}(D)}\leq \liminf_{j\to+\infty}\|w_{n_j}\|_{L^{1+\frac{1}{p}}(D)}.
\end{equation}
and
\begin{equation}\label{x101}
E(\eta)=\lim_{n\to+\infty}E(w_n)=M_p,
\end{equation}
By \eqref{x101} we get
$\eta\in \mathcal W_p$ and
\begin{equation}\label{x102}
\|\eta\|_{L^{1+\frac{1}{p}}(D)}=\mu_p.
\end{equation}
Now \eqref{x100} and \eqref{x102} together give
\[\mu_p=\|\eta\|_{L^{1+\frac{1}{p}}(D)}\leq \liminf_{j\to+\infty}\|w_{n_j}\|_{L^{1+\frac{1}{p}}(D)}\leq \limsup_{j\to+\infty}\|w_{n_j}\|_{L^{1+\frac{1}{p}}(D)}\leq \mu_p,\]
which yields
\begin{equation}\label{x105}
\lim_{j\to+\infty}\|w_{n_j}\|_{L^{1+\frac{1}{p}}(D)}=\|\eta\|_{L^{1+\frac{1}{p}}(D)}=\mu_p.
\end{equation}
From \eqref{wc55} and \eqref{x105} we immediately obtain strong convergence, which completes the proof.
\end{proof}
From Lemma \ref{scmcs}, we can easily prove the following
\begin{lemma}\label{bcyx}

For any $s\in[1,+\infty)$, $\mathcal W_p$ is compact in $L^{s}(D)$.

\end{lemma}
\begin{proof}
By Lemma \ref{scmcs}, it is clear that $\mathcal W_p$ is compact in $L^{1+\frac{1}{p}}(D)$. Taking into account Lemma \ref{301}, we deduce that  $\mathcal W_p$ is compact in $L^{s}(D)$ for any $s\in[1,+\infty)$.

\end{proof}


In Section 2, we only show that $\mathcal W_p$ is the set of maximizers of $E$ over $\mathcal S_p$. However, to prove Theorem \ref{thm1} we need to consider initial perturbations in $L^\infty(D)$. To overcome this difficulty, we used the method of ``followers" introduced by Burton in \cite{B5}.
To begin with, we need the following existence and uniqueness result for linear transport equations proved by  Burton in \cite{B5}.
\begin{lemma}\label{lteb5}
Let $\omega\in L^\infty((0,+\infty)\times D)$ and $\zeta_0\in L^\infty(D)$. Then exists a unique  $\zeta\in L^\infty((0,+\infty)\times D)$ such that
\begin{itemize}
\item[(a)] $\zeta$ satisfies $\partial_t\zeta +\nabla^\perp \mathcal G\omega\cdot\nabla\zeta=0$ in the sense of distributions, that is,
\begin{equation}\label{ydmf}
\int_0^\infty\int_{D} \zeta\partial_t\phi+\zeta\nabla^\perp\mathcal G\omega\cdot\nabla\phi dxdt=0\quad \mbox { for any }\phi\in C_c^\infty((0,+\infty)\times D);
\end{equation}
\item [(b)] $\zeta\in C([0,+\infty); L^s(D))$ for any $s\in[1,+\infty)$;
\item[(c)] $\zeta(0,\cdot)=\zeta_0;$
\item[(d)] $\zeta(t,\cdot)\in\mathcal R_{\zeta_0}$ for any $t\in[0,+\infty).$
\end{itemize}
\end{lemma}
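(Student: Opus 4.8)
The plan is to construct $\zeta$ explicitly by transporting $\zeta_0$ along the flow generated by the divergence-free velocity field $\mathbf v(t,\cdot):=\nabla^\perp\mathcal G\omega(t,\cdot)$, and then to read off properties (a)--(d) from the geometry of this flow. First I would record the spatial regularity of $\mathbf v$. Since $\omega\in L^\infty((0,+\infty)\times D)$, the standard $L^q$ elliptic estimate gives $\mathcal G\omega(t,\cdot)\in W^{2,q}(D)$ for every $q\in(1,+\infty)$ with a bound uniform in $t$, so that $\mathbf v\in L^\infty\big((0,+\infty);W^{1,q}(D)\big)$; and the classical potential-theoretic bound for velocity fields generated by bounded vorticity shows that $\mathbf v(t,\cdot)$ is log-Lipschitz, i.e. admits a modulus of continuity of the form $Cr|\log r|$, uniformly in $t$ (see e.g. \cite{MPu}). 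Moreover $\mathbf v$ is divergence-free and satisfies $\mathbf v\cdot\mathbf n=0$ on $\partial D$.

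Next I would solve the characteristic ODE
\[\frac{d}{dt}\Phi_t(x)=\mathbf v(t,\Phi_t(x)),\qquad \Phi_0(x)=x.\]
Because $\mathbf v$ is only log-Lipschitz rather than Lipschitz, Picard's theorem does not apply; instead the log-Lipschitz modulus satisfies the Osgood condition, so existence and uniqueness of the flow follow from the Osgood criterion. The no-flux condition $\mathbf v\cdot\mathbf n=0$ forces $\Phi_t$ to map $\bar D$ onto itself, and since $\mathbf v$ is divergence-free, Liouville's theorem shows that each $\Phi_t$ is a measure-preserving homeomorphism of $D$. I would then \emph{define} $\zeta(t,x):=\zeta_0(\Phi_t^{-1}(x))$.

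With this definition, properties (c) and (d) are immediate: $\Phi_0=\mathrm{id}$ gives $\zeta(0,\cdot)=\zeta_0$, and since $\Phi_t$ preserves Lebesgue measure, $|\{\zeta(t,\cdot)>a\}|=|\Phi_t(\{\zeta_0>a\})|=|\{\zeta_0>a\}|$ for every $a$, so $\zeta(t,\cdot)\in\mathcal R_{\zeta_0}$. Property (b) I would first verify for smooth $\zeta_0$, where continuity in $L^s$ is inherited from the $t$-continuity of the flow map, and then extend to general $\zeta_0\in L^\infty(D)$ by density, using that for each fixed $t$ the map $\zeta_0\mapsto\zeta(t,\cdot)$ is an isometry of $L^s(D)$ (a consequence of measure-preservation). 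For property (a) I would test the transport equation against $\phi\in C_c^\infty((0,+\infty)\times D)$, change variables via the measure-preserving map $y=\Phi_t^{-1}(x)$, and observe that the chain rule collapses the integrand into $\frac{d}{dt}\big[\phi(t,\Phi_t(y))\big]$; integrating in $t$ and using the compact support of $\phi$ yields zero.

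Finally, the ``moreover'' assertion claims (d) for \emph{any} $\zeta$ satisfying (a)--(c), not merely the one constructed. Here I would invoke the uniqueness theory for linear transport equations with Sobolev velocity fields: since $\mathbf v\in L^\infty\big((0,+\infty);W^{1,q}(D)\big)$ is divergence-free, the DiPerna--Lions theory guarantees that the solution of (a)--(c) is unique, hence coincides with $\zeta_0\circ\Phi_t^{-1}$, for which (d) has already been shown. The main obstacle throughout is the borderline regularity of $\mathbf v$: the log-Lipschitz bound is exactly what makes the flow well-defined and measure-preserving (via Osgood rather than Picard), while the Sobolev bound is exactly what the DiPerna--Lions renormalization argument needs in order to rule out spurious, non-rearrangement-preserving solutions. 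Controlling these two thresholds is the technical heart of the argument; the verification of (a)--(d) for the explicit solution is then routine.
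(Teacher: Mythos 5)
The paper does not actually prove this lemma: it is quoted from Burton \cite{B5} (with the flow theory for log-Lipschitz fields as in \cite{MPu} and the uniqueness/renormalization theory of DiPerna--Lions in the background), and your proposal reconstructs essentially that standard argument --- Osgood well-posedness of the characteristics for the log-Lipschitz field $\nabla^\perp\mathcal G\omega$, measure preservation of the flow giving (c)--(d) for the transported solution, and renormalization-type uniqueness to extend (d) to every solution of (a)--(c). Two points deserve explicit care if you write this up. First, $\omega$ is merely $L^\infty$ in $t$, so the ODE must be solved in the Carath\'eodory sense, with Osgood uniqueness applied using a modulus uniform in $t$; this is routine but should be said. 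Second, in the ``moreover'' step the cleaner route (and closer to Burton's) is not full uniqueness but renormalization: for any $\zeta$ satisfying (a)--(c), the DiPerna--Lions theory (applicable since $\mathbf v\in L^\infty\big((0,+\infty);W^{1,q}(D)\big)$ is divergence-free and tangent to $\partial D$, so its zero extension remains distributionally divergence-free and no mass crosses the boundary) shows $\beta(\zeta)$ again solves the transport equation for every suitable $\beta\in C^1(\mathbb R)$, whence $\int_D\beta(\zeta(t,\cdot))\,dx$ is conserved, and equimeasurability with $\zeta_0$ follows by approximating indicators of half-lines by such $\beta$; this sidesteps the identification of $\zeta$ with $\zeta_0\circ\Phi_t^{-1}$, although your uniqueness argument is also valid under the same hypotheses.
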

\begin{proof}
See Lemmas 11 and  12 in \cite{B5}.
\end{proof}
Now we are ready to give the proof of Theorem \ref{thm1}.
\begin{proof}[Proof of Theorem \ref{thm1}]

Suppose by contradiction that $\mathcal W_p$ is not orbitally stable in $L^s$ norm with initial perturbations in $L^\infty(D)$. Then there exist a positive number $\varepsilon_0,$ a sequence of initial values $\{\omega_0^n\}_{n=1}^{+\infty}\subset L^\infty(D)$ and a sequence of positive numbers $\{t_n\}_{n=1}^{+\infty}$ such that
\begin{equation}\label{snt}
\inf_{w\in\mathcal W_p}\|\omega^n_0-w\|_{L^s(D)}<\frac{1}{n} \mbox{ for all }n
\end{equation}
and
 \begin{equation}\label{pn1}
\inf_{w\in\mathcal W_p}\|\omega^n(t_n,\cdot)-w\|_{L^s(D)}\geq\varepsilon_0 \mbox{ for all }n,
\end{equation}
where $\omega^n(t,x)$ is the unique solution to the vorticity equation with initial vorticity $\omega^n_0.$ By the conservation of vorticity (see (ii) in Yudovich's theorem in Section 1), it holds that
\begin{equation}\label{abcdd}
\omega^n(t_n,\cdot) \in\mathcal R_{\omega^n_0}\,\, \mbox{ for each $n$.}
\end{equation}

By \eqref{snt}, we can choose $\zeta^n_0\in\mathcal W_p$ such that
\begin{equation}\label{znii}
\|\zeta^n_0-\omega^n_0\|_{L^s(D)}<\frac{1}{n}\,\, \mbox{ for each $n$.}
\end{equation}
By Lemma \ref{301} it is clear that
\begin{equation}\label{em1123}
\{\zeta^n_0\}_{n=1}^{+\infty}\,\mbox{ is bounded  in $L^\infty(D)$}.
\end{equation}
Besides, by \eqref{znii} it is easy to verify that
\begin{equation}\label{em11}
\lim_{n\to+\infty}E(\omega^n_0)=\lim_{n\to+\infty}E(\zeta^n_0)=M_p.
\end{equation}
Using the conservation of energy (see (iii) in Yudovich's theorem), we get  from \eqref{em11} that
\begin{equation}\label{em12}
\lim_{n\to+\infty}E(\omega^n(t_n,\cdot))=M_p.
\end{equation}

By Lemma \ref{lteb5}, for each $n$ there exists $\zeta^n(t,x)\in L^\infty((0,+\infty)\times D)$ that solves \[\partial_t\zeta^n+\nabla^\perp\mathcal G\omega^n\cdot\nabla\zeta^n=0\] in the sense of distributions with initial data $\zeta^n(0,\cdot)=\zeta^n_0.$
Moreover, by (d) in Lemma \ref{lteb5} we have
\begin{equation}\label{ltf}
\zeta^n(t_n,\cdot) \in\mathcal R_{\zeta^n_0}\subset \mathcal S_p\,\, \mbox{ for all $n$}.
\end{equation}
It is also easy to see that
\begin{equation}\label{ydls}
\partial_t(\zeta^n-\omega^n)+\nabla^\perp\mathcal G\omega^n\cdot\nabla(\zeta^n-\omega^n)=0
\end{equation}
holds in the sense of distributions,
therefore again by (d) in Lemma \ref{lteb5} we obtain
\begin{equation}\label{lte2}
   \zeta^n(t_n,\cdot)-\omega^n(t_n,\cdot)\in\mathcal R_{\zeta^n_0-\omega^n_0} \,\,\mbox{ for all $n$}.
\end{equation}
To summarize, by using Lemma \ref{lteb5} we have constructed a sequence of ``followers" $\{\zeta^n\}_{n=1}^{+\infty}$  satisfying \eqref{ltf} and \eqref{lte2}.

By \eqref{em1123} and \eqref{ltf}, we see that
\begin{equation}\label{mzxf}
\{\zeta^n(t_n,\cdot)\}_{n=1}^{+\infty} \,\mbox{ is bounded in $L^\infty(D)$.}
\end{equation}
Besides, as a consequence of \eqref{znii} and \eqref{lte2}, it holds that
 \begin{equation}\label{ltg}
\|\zeta^n(t_n,\cdot)-\omega^n(t_n,\cdot)\|_{L^s(D)}=\|\zeta^n_0-\omega^n_0\|_{L^s(D)}<\frac{1}{n} \mbox{ for all }n.
\end{equation}

In view of \eqref{em12} and \eqref{ltg}, we obtain
\begin{equation}\label{em13}
\lim_{n\to+\infty}E(\zeta^n(t_n,\cdot))=M_p.
\end{equation}
Applying Lemma \ref{scmcs}, we deduce from \eqref{em13}  that there exist a subsequence $\{\zeta^{n_j}(t_{n_j},\cdot)\}_{j=1}^{+\infty}$ and some $\eta\in\mathcal W_p$ such that \begin{equation}\label{amfin}
\zeta^{n_j}(t_{n_j},\cdot)\to \eta \,\,\mbox{ in $L^{1+\frac{1}{p}}(D)$ as $j\to +\infty$},
\end{equation}
which together with \eqref{mzxf} gives
 \begin{equation}\label{mzxf2}
\zeta^{n_j}(t_{n_j},\cdot)\to \eta \,\,\mbox{ in $L^{s}(D)$ as $j\to +\infty$}.
\end{equation}

Now \eqref{ltg} and \eqref{mzxf2} yield
\[\lim_{j\to+\infty}\omega^{n_j}(t_{n_j},\cdot)=\eta \in\mathcal W_p,\]
which is a contradiction to \eqref{pn1}. Thus the proof is completed.
\end{proof}

\begin{remark}
If we only consider the smaller perturbation class $\mathcal S_p,$ then there is no  need to introduce the ``follower" $\zeta^n.$

\end{remark}

The proof of Theorem \ref{thm2} is based on
Theorem \ref{thm1}.

 \begin{proof}[Proof of Theorem \ref{thm2}]
Suppose that the conclusion is not true. Then there exist a positive number $\varepsilon_0,$ a sequence of initial values $\{\omega_0^n\}_{n=1}^{+\infty}\subset \mathcal S_p\cap L^\infty(D)$ and a sequence of positive numbers $\{t_n\}_{n=1}^{+\infty}$ such that
\begin{equation}\label{e00}
\inf_{w\in \mathcal W_p}\|w-\omega^n_0\|_{E}<\frac{1}{n}
\end{equation}
and
\begin{equation}\label{e01}
\inf_{w\in \mathcal W_p}\|w-\omega^n(t_n,\cdot)\|_E\geq\varepsilon_0 \,\,\mbox{ for all }n.
\end{equation}
where $\omega^n$ is the unique weak solution to the vorticity equation with initial vorticity $\omega^n_0.$

Applying Lemma \ref{ufcu}, we deduce from \eqref{e01} that there exists some $\tau_0>0$, not depending on $n$, such that
\begin{equation}\label{e02}
\inf_{w\in \mathcal W_p}\|w-\omega^n(t_n,\cdot)\|_{L^{1+\frac{1}{p}}(D)}\geq\tau_0 \,\,\mbox{ for each }n.
\end{equation}
By \eqref{e02} and the fact that $\mathcal W_p$ is orbitally stable in the $L^{1+\frac{1}{p}}$ norm of the vorticity with initial perturbations in $L^\infty(D)$ (this has been proved in Theorem \ref{thm1} by choosing $s=1+{1}/{p}$), we deduce that there exist a positive real number $\tau_1$ and a positive integer $N$ such that
\begin{equation}\label{e03}
\inf_{w\in \mathcal W_p}\|w-\omega^n_0\|_{L^{1+\frac{1}{p}}(D)}\geq\tau_1 \,\,\mbox{ for all }n\geq N.
\end{equation}

On the other hand, since $\{\omega^n_0\}_{n=1}^{+\infty}\subset \mathcal S_p$, we can choose a subsequence $\{\omega^{n_j}_0\}_{j=1}^{+\infty}$ and some $\eta\in \mathcal S_p$ such that as $j\to+\infty$
\begin{equation}\label{wcccc}
\omega^{n_j}_0\rightharpoonup \eta \,\,\mbox{ in } L^{1+\frac{1}{p}}(D),
\end{equation}
and thus
\begin{equation}\label{u000}
\lim_{j\to+\infty}E(\omega^{n_j}_0)=E(\eta).
\end{equation}
We claim that $\{\omega_0^{n_j}\}_{j=1}^{+\infty}$ is a maximizing sequence of $E$ over $\mathcal S_p$, that is,
\begin{equation}\label{u00}
\lim_{j\to+\infty}E(\omega^{n_j}_0)=M_p.
\end{equation}
To prove this, first we choose a sequence $\{w_j\}_{j=1}^{+\infty}\subset\mathcal W_p$
such that
\begin{equation}\label{dman}
\|\omega^{n_j}_0-w_j\|_E<\frac{1}{n_j} \,\,\mbox{ for any }j.
\end{equation}
This is doable by \eqref{e00}. Note that  $\|w_j\|_E=(2E(w_j))^{1/2}=(2M_p)^{1/2}$ for any $j$. Next we calculate $|E(\omega^{n_j}_0)-M_p|$
as follows
\begin{equation}\label{g01}
\begin{split}
|E(\omega^{n_j}_0)-M_p|=&|E(\omega^{n_j}_0)-E(w_j)|\\
=&\frac{1}{2}\left|\|\omega^{n_j}_0\|^2_{E}-\|w_j\|^2_{E}\right|\\
\leq&\frac{1}{2}\left|(\|\omega^{n_j}_0\|_{E}+\|w_j\|_{E})(\|\omega^{n_j}_0\|_{E}-\|w_j\|_{E})\right|\\
\leq &\frac{1}{2}(\|\omega^{n_j}_0\|_{E}+\|w_j\|_{E})\|\omega^{n_j}_0-w_j\|_{E}\\
=&\sqrt{2}M_p^{1/2}\|\omega^{n_j}_0-w_j\|_E.
\end{split}
\end{equation}
Here we used the fact
\[\|w\|_E=(2E(w))^{1/2}\leq (2M_p)^{1/2}\quad\forall\,w\in\mathcal S_p.\]
Combining \eqref{dman} and \eqref{g01} we obtain \eqref{u00}.

From \eqref{wcccc} and \eqref{u00}, we can apply Lemma \ref{scmcs} to obtain
 $\eta\in\mathcal W_p$ and
\begin{equation}\label{finish}
\omega^{n_j}_0\to\eta\,\, \mbox{ in } L^{1+\frac{1}{p}}(D),
\end{equation}
which is a contradiction to \eqref{e03}.

\end{proof}

\section{Proofs of Theorems \ref{thm3} and \ref{thm4}}

First we prove a lemma that will be used in the proof of Theorem \ref{thm4}.
\begin{lemma}\label{eqnms}
Let $\mathcal A$ be a subset of $\mathcal W_p.$ Denote $\mathcal A^c= \mathcal W_p\setminus\mathcal A.$ Then the following three items are equivalent
\begin{itemize}
\item [(i)] $\inf_{v\in\mathcal A, w\in \mathcal A^c}\|v-w\|_{L^1(D)}>0$;
\item[(ii)]$\inf_{v\in\mathcal A, w\in \mathcal A^c}\|v-w\|_{L^s(D)}>0$ for any $s\in[1,+\infty)$;
\item[(iii)] $\inf_{v\in\mathcal A, w\in \mathcal A^c}\|v-w\|_{E}>0$.
\end{itemize}
\end{lemma}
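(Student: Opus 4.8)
The plan is to reduce everything to the fact, proved in Lemma \ref{301}, that $\mathcal W_p$ is bounded in $L^\infty(D)$, say $\|w\|_{L^\infty(D)}\le C_0$ for all $w\in\mathcal W_p$. Then every difference $v-w$ with $v\in\mathcal A$, $w\in\mathcal A^c$ lies in a set that is uniformly bounded in $L^\infty(D)$ (with $\|v-w\|_{L^\infty(D)}\le 2C_0$) and, by Lemma \ref{scmcs}, precompact in $L^{1+\frac1p}(D)$. On such a set the $L^s$ norms are mutually comparable by elementary interpolation, which disposes of $(1)\Leftrightarrow(2)$ at once, while the energy norm is strictly weaker and will require the compactness. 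Throughout I write $I_s$ for the infimum in item $(s)$ and $I_E$ for the infimum in $(3)$, so that $(1)$ is the statement $I_1>0$.

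For $(1)\Leftrightarrow(2)$: since $s=1$ is admissible in $(2)$, the implication $(2)\Rightarrow(1)$ is trivial. Conversely, for any $s\in[1,+\infty)$ Hölder's inequality on the bounded domain gives $\|v-w\|_{L^1(D)}\le |D|^{1-1/s}\|v-w\|_{L^s(D)}$, hence $I_s\ge |D|^{-(1-1/s)}I_1$, so $I_1>0$ forces $I_s>0$ for every $s$; this is $(1)\Rightarrow(2)$.

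The easy half of the energy equivalence is $(3)\Rightarrow(1)$. For $v\in\mathcal A$, $w\in\mathcal A^c$, recalling $\|v-w\|_E^2=2E(v-w)=\int_D(v-w)\mathcal G(v-w)\,dx$ and that $\mathcal G$ maps the (uniformly $L^{1+\frac1p}$-bounded) differences into a bounded subset of $L^\infty(D)$, I would use
\[
\|v-w\|_E^2=\int_D(v-w)\mathcal G(v-w)\,dx\le \|v-w\|_{L^1(D)}\,\|\mathcal G(v-w)\|_{L^\infty(D)}\le C\|v-w\|_{L^1(D)}.
\]
Thus $\|v-w\|_E\le C^{1/2}\|v-w\|_{L^1(D)}^{1/2}$, so $I_E\le C^{1/2}I_1^{1/2}$ and $I_E>0$ implies $I_1>0$.

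The main obstacle is the reverse implication $(1)\Rightarrow(3)$: because the energy norm is genuinely weaker than the $L^1$ norm, no pointwise inequality can reverse the display above, and the special structure of $\mathcal W_p$ must enter through compactness. I would argue by contradiction. Suppose $I_1>0$ but $I_E=0$, and choose $v_n\in\mathcal A$, $w_n\in\mathcal A^c$ with $\|v_n-w_n\|_E\to 0$. By Lemma \ref{scmcs}, $\mathcal W_p$ is compact in $L^{1+\frac1p}(D)$, so along a subsequence $v_{n_j}\to v$ and $w_{n_j}\to w$ strongly in $L^{1+\frac1p}(D)$ with $v,w\in\mathcal W_p$. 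Applying Lemma \ref{ufcu} to the bounded family of differences (which converges to $v-w$ in $L^{1+\frac1p}(D)$) gives $\|v_{n_j}-w_{n_j}\|_E^2=2E(v_{n_j}-w_{n_j})\to 2E(v-w)=\|v-w\|_E^2$, so $\|v-w\|_E=0$. Since $-\Delta\mathcal G(v-w)=v-w$ with $\mathcal G(v-w)=0$ on $\partial D$, the vanishing of $\int_D|\nabla\mathcal G(v-w)|^2\,dx$ forces $\mathcal G(v-w)\equiv 0$ and hence $v=w$. But then $\|v_{n_j}-w_{n_j}\|_{L^1(D)}\to\|v-w\|_{L^1(D)}=0$, contradicting $I_1>0$. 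This establishes $(1)\Rightarrow(3)$ and, together with the preceding implications, the equivalence of $(1)$, $(2)$ and $(3)$.
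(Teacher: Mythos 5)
Your proposal is correct and takes essentially the same route as the paper: the easy implications come from elementary norm inequalities on the $L^\infty$-bounded set $\mathcal W_p$ (Lemma \ref{301}, H\"older's inequality, and the boundedness of $\mathcal G$), while the hard implication $(1)\Rightarrow(3)$ (stated in the paper as $(2)\Rightarrow(3)$) is the same contradiction argument combining the compactness of $\mathcal W_p$ in $L^{1+\frac{1}{p}}(D)$ from Lemma \ref{scmcs} with the continuity of $E$ from Lemma \ref{ufcu}. The only cosmetic differences are that you control $\|v-w\|_E^2$ directly by $\|v-w\|_{L^1(D)}$ where the paper passes through $L^2(D)$, and that you spell out the appeal to Lemma \ref{ufcu} which the paper leaves implicit.
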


\begin{proof}
Without loss of generality assume that $\mathcal A$ and $\mathcal A^c$ are both nonempty.

First we show that (i) and (ii) are equivalent. Let $s\in[1,+\infty)$ be fixed. Since $\mathcal W_p$ is a bounded subset of $L^\infty(D)$ (see Lemma \ref{301}) and $D$ is a domain of finite Lebesgue measure, we can apply H\"older's inequality to obtain
\begin{equation}\label{isff}
c_1\|v-w\|_{L^1(D)}\leq \|v-w\|_{L^s(D)}\leq c_2\|v-w\|_{L^1(D)},\quad\forall\,v,w\in \mathcal W_p,
\end{equation}
where $c_1,c_2$ are positive constants depending only on $\mathcal W_p, D$ and $s$. From \eqref{isff} we immediately see that (i) and (ii) are equivalent.

Now we show that (ii) and (iii) are equivalent. First suppose (iii) holds.  By $L^p$ estimate and Sobolev embedding theorem, it holds that
\[\|v-w\|_{L^2(D)}\geq  c_3\|v-w\|_{E},\quad\forall\,v,w\in \mathcal W_p,\]
where $c_3$ is a positive constant depending only on $D$. From this we deduce that (ii) holds for $s=2$. Taking into account  the fact that (i) and (ii) are equivalent, we see that (ii) actually holds for arbitrary $s\in[1,+\infty).$ Now suppose (ii) holds. If (iii) is false, then we can choose two sequences $\{v_n\}_{n=1}^{+\infty}\subset \mathcal A$, $\{w_n\}_{n=1}^{+\infty}\subset \mathcal A^c$ such that
\begin{equation}\label{av10}
\lim_{n\to+\infty}\|v_n-w_n\|_{E}=0.
\end{equation}
Since $\mathcal W_p$ is compact in $L^{2}(D)$ (see Lemma \ref{bcyx}),
there exist subsequences $\{v_{n_j}\}_{j=1}^{+\infty}$, $\{w_{n_j}\}_{j=1}^{+\infty}$ and $\eta_1,\eta_2\in\mathcal W_p$ such  that
\begin{equation}\label{mked}
\lim_{j\to+\infty}\|v_{n_j}-\eta_1\|_{L^{2}(D)}=0,\quad\lim_{j\to+\infty}\|w_{n_j}-\eta_2\|_{L^{2}(D)}=0,
\end{equation}
which implies
\begin{equation}\label{mked2}
\lim_{j\to+\infty}\|v_{n_j}-\eta_1\|_{E}=0,\quad\lim_{j\to+\infty}\|w_{n_j}-\eta_2\|_{E}=0.
\end{equation}
From \eqref{av10} and \eqref{mked2} we get
\begin{equation}\label{xdd1}
\eta_1=\eta_2.
\end{equation}
On the other hand,
by choosing $s=2$ in (ii) we have
\[\inf_{v\in\mathcal A, w\in \mathcal A^c}\|v-w\|_{L^{2}(D)}>0,\]
hence $\|\eta_1-\eta_2\|_{L^{2}(D)}>0$, which contradicts \eqref{xdd1}.

\end{proof}

Now we are ready to give the proof of Theorem \ref{thm3}.

\begin{proof}[Proof of Theorem \ref{thm3} ]

Denote
\[d:=\inf_{w\in\mathcal W_p\setminus \{\tilde\omega\}}\|w-\tilde\omega\|_{L^s(D)}>0.\]

Since we have proved the orbital stability of $\mathcal W_p$ in Theorem \ref{thm1}, we see that for any $\varepsilon\in (0,d/4)$, there exists $\delta>0$, such that for any $\omega_0\in L^\infty(D)$ satisfying
\begin{equation}\label{p41}
\|\omega_0-\tilde\omega\|_{L^s(D)}<\delta,
\end{equation}
 it holds that
  \begin{equation}\label{p42}
  \inf_{w\in \mathcal W_p}\|\omega(t,\cdot)-w\|_{L^s(D)}<\varepsilon \,\,\mbox{  for any } t>0,
 \end{equation}
 where $\omega(t, x)$ is the unique weak solution to the vorticity equation with initial vorticity $\omega_0$. Without loss of generality we assume that
 \begin{equation}\label{ded2}
 0<\delta<d/2.
 \end{equation}

Now we claim that
  \begin{equation}\label{p43}
\|\omega(t,\cdot)-\tilde\omega\|_{L^s(D)}<\varepsilon \,\,\mbox{  for any } t>0.
 \end{equation}
In fact, if this is not true, then there exists some $t_1>0$ such that
  \begin{equation}\label{p44}
\|\omega(t_1,\cdot)-\tilde\omega\|_{L^s(D)}\geq \varepsilon,
 \end{equation}
which together with \eqref{p42} implies that there exists some $w_1\in\mathcal W_p\setminus\{ \tilde\omega\}$, such that
  \begin{equation}\label{p45}
\|\omega(t_1,\cdot)-w_1\|_{L^s(D)}<\varepsilon.
 \end{equation}
From \eqref{p45} and the definition of $d$ we get
\begin{equation}\label{p46}
\|\omega(t_1,\cdot)-\tilde\omega\|_{L^s(D)}\geq \|\tilde\omega-w_1\|_{L^s(D)}-\|\omega(t_1,\cdot)-w_1\|_{L^s(D)}\geq d-\varepsilon\geq\frac{3}{4}d.
\end{equation}
Since $\omega\in C([0,+\infty);L^{s}(D))$ (see (i) in Yudovich's theorem), we deduce from \eqref{p41}, \eqref{ded2} and \eqref{p46} that there exists some $t_2$ such that
\begin{equation}\label{p47}
\|\omega(t_2,\cdot)-\tilde\omega\|_{L^s(D)}=\frac{d}{2}.
\end{equation}
Thus for any $w\in\mathcal W_p\setminus{\{\tilde\omega\}}$, we get
  \begin{align}\label{p48}
\|\omega(t_2,\cdot)-w\|_{L^s(D)}\geq\|\tilde\omega-w\|_{L^s(D)}-\|\omega(t_2,\cdot)-\tilde\omega\|_{L^s(D)}\geq d-\frac{d}{2}=\frac{d}{2},
 \end{align}
which together with \eqref{p47} contradicts \eqref{p42}.  Hence \eqref{p43} is proved.

To summarize, we have proved that for any $\varepsilon\in (0,d/4)$, there exists some $\delta>0$, such that for any $\omega_0\in L^\infty(D)$ satisfying
\[\|\omega_0-\tilde\omega\|_{L^s(D)}<\delta,\] it holds that \[\|\omega(t,\cdot)-\tilde\omega\|_{L^s(D)}<\varepsilon\quad\forall\,t>0.\] Therefore the proof is finished.

\end{proof}

\begin{remark}
By Proposition \ref{prop1}, it is easy to check that an isolated least solution $\tilde\omega$ must be an isolated maximizer of $E$ over $\mathcal R_{\tilde\omega}.$ Thus Theorem \ref{thm3} can also be proved by applying  Burton's stability criterion in \cite{B5}. Either way, Proposition \ref{prop1} is the key point.
\end{remark}

The proof of Theorem \ref{thm4} is similar.
\begin{proof}[Proof of Theorem \ref{thm4}] First note that for any weak solution $\omega(t,x)$ to the vorticity equation, $\|\omega(t,\cdot)\|_E$ is a continuous  function of  $t$.
By Lemma \ref{eqnms}, for any isolated least energy solution $\tilde\omega\in\mathcal W_p$ it holds that
\[\inf_{w\in\mathcal W_p\setminus \{\tilde\omega\}}\|w-\tilde\omega\|_{E}>0.\]
Then the proof is almost identical to that of Theorem \ref{thm3} after replacing the $L^s$ norm $\|\cdot\|_{L^s(D)}$ by the energy norm $\|\cdot\|_{E}.$

\end{proof}

\begin{remark}
Based on the same argument, we can also prove the orbital stability of any isolated subset of $\mathcal W_p$.  More precisely, for a nonempty subset $\mathcal A$ of $\mathcal W_p$, if
\[\inf_{v\in \mathcal A, w\in\mathcal W_p\setminus \mathcal A}\|v-w\|_{L^1(D)}>0,\]
then $\mathcal A$ is orbitally stable in the $L^s$ norm of the vorticity with respect to initial perturbations in $L^\infty(D),$ where $s\in(1,+\infty),$ and is also orbitally stable in the energy norm $\|\cdot\|_E$ with respect to initial perturbations in $ \mathcal S_p\cap L^\infty(D).$
As a result, the set of positive least energy solutions $\{w\in\mathcal W_p\mid w>0 \mbox{ in } D\}$ and the set of negative least energy solutions $\{w\in\mathcal W_p\mid w<0 \mbox{ in } D\}$ are both orbitally stable in the above sense.

\end{remark}

\section{$0<p\leq 1$}

In the preceding sections, we have assumed that $p\in(1,+\infty)$. In this section, we briefly discuss the case of $p\in(0,1]$ in an analogous way.
For $p\in(0,1),$ we consider
\begin{equation}\label{p001}
\begin{cases}
-\Delta u=u^p &x\in D,\\
u>0&x\in D,\\
u=0&x\in\partial D.
\end{cases}
\end{equation}
For $p=1,$ we consider
\begin{equation}\label{p011}
\begin{cases}
-\Delta u=\lambda_1 u &x\in D,\\
u>0&x\in D,\\
u=0&x\in\partial D,\\
\|-\Delta u\|_{L^2(D)}=1.
\end{cases}
\end{equation}
Since  both \eqref{p001} and \eqref{p011} admit a unique solution (see below),  they are natural generalizations of the
case of $p\in(1,+\infty).$

\subsection{$0<p<1$}

For \eqref{p001},  there exists a unique solution  $\tilde u_p$ for any smooth bounded domain $D$ and $p\in(0,1)$. See  Theorem 7.41, p. 267 in \cite{Kra} for example.
Denote $\tilde\omega_p=-\Delta \tilde u_p$. Then $\tilde\omega_p$ is the unique function such that
\begin{equation}\label{sxxn}
\tilde\omega_p>0 \,\, \mbox{ a.e. in } D, \quad \tilde\omega_p=(\mathcal G\tilde\omega_p)^p.
\end{equation}

\begin{remark}
Note that $ \pm\tilde u_p $  are the only two maximizers of the natural energy functional $\mathcal I$ in $H^1_0(D).$

\end{remark}

\begin{proposition}\label{eccz}
Let $p\in(0,1).$ Then
$\tilde\omega_p$ and $-\tilde\omega_p$ are the only two  maximizers of $E$ over $\mathcal S_p,$ where $\mathcal S_p$ is defined by \eqref{dofsp}.
\end{proposition}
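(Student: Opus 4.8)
The plan is to follow the variational scheme of Section~2 almost verbatim, since nothing in Lemmas~\ref{le222} and \ref{profi} actually used $p>1$, and then to replace the least-energy/Nehari argument of Lemma~\ref{lem24} (which is unavailable here) by the uniqueness statement \eqref{sxxn}. First I would establish that $E$ attains its maximum $M_p$ over $\mathcal S_p$: the bound $|E(w)|\le C\mu_p^2$ on $\mathcal S_p$, the weak sequential continuity of $E$, and the weak compactness of $\mathcal S_p$ in the reflexive space $L^{1+\frac1p}(D)$ (note $1+\frac1p\in(2,\infty)$) give a maximizer exactly as in Lemma~\ref{le222}. The same scaling argument shows every maximizer $\tilde\omega$ satisfies $\|\tilde\omega\|_{L^{1+\frac1p}(D)}=\mu_p$, and the comparison $E(|\tilde\omega|)-E(\tilde\omega)=\int_D\tilde\omega^+\mathcal G\tilde\omega^-+\tilde\omega^-\mathcal G\tilde\omega^+\,dx\ge0$ together with the strong maximum principle forces $\tilde\omega$ to be of one sign; since $E(-w)=E(w)$ it suffices to treat a positive maximizer. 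The first-variation computation of Lemma~\ref{profi} then yields the Euler--Lagrange equation
\[\tilde\omega=\lambda_p(\mathcal G\tilde\omega)^p\ \text{a.e. in }D,\qquad \lambda_p=(2M_p)^{-p}\mu_p^{p+1}>0.\]

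The new ingredient is to identify $\tilde\omega$ through uniqueness. I would rescale: set $\alpha=\lambda_p^{1/(p-1)}$ (well defined and positive because $p-1<0$ and $\lambda_p>0$) and $\hat\omega=\alpha\tilde\omega$. A direct check gives $(\mathcal G\hat\omega)^p=\alpha^p(\mathcal G\tilde\omega)^p=\alpha^p\lambda_p^{-1}\tilde\omega=\alpha^{p-1}\lambda_p^{-1}\hat\omega=\hat\omega$, so $\hat\omega>0$ a.e. and $\hat\omega=(\mathcal G\hat\omega)^p$; equivalently $\mathcal G\hat\omega$ solves \eqref{p001}. By the uniqueness recorded in \eqref{sxxn}, $\hat\omega=\tilde\omega_p$, whence $\tilde\omega=\alpha^{-1}\tilde\omega_p=\lambda_p^{1/(1-p)}\tilde\omega_p$; in particular every positive maximizer is a positive multiple of $\tilde\omega_p$.

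Finally I would fix the multiple using the boundary constraint from the first step. Since $\tilde\omega_p$ itself lies on $\partial\mathcal S_p$, i.e. $\|\tilde\omega_p\|_{L^{1+\frac1p}(D)}=\mu_p$, the chain $\mu_p=\|\tilde\omega\|_{L^{1+\frac1p}(D)}=\lambda_p^{1/(1-p)}\|\tilde\omega_p\|_{L^{1+\frac1p}(D)}=\lambda_p^{1/(1-p)}\mu_p$ forces $\lambda_p=1$ and hence $\tilde\omega=\tilde\omega_p$. Thus the only positive maximizer is $\tilde\omega_p$, and by the symmetry $E(-w)=E(w)$ the only negative maximizer is $-\tilde\omega_p$, which proves the proposition. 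The main obstacle --- and the only place where $0<p<1$ genuinely enters --- is that the normalization $2M_p=\mu_p^{1+\frac1p}$ can no longer be obtained from the Nehari-manifold comparison of Lemma~\ref{lem24}; here it is forced a posteriori by $\lambda_p=1$, and the argument closes precisely because $p\neq1$ makes the rescaling $\alpha=\lambda_p^{1/(p-1)}$ invertible and because uniqueness \eqref{sxxn} pins down $\hat\omega$ (for $p>1$ uniqueness may fail, which is exactly why $\mathcal W_p$ there need not reduce to a single pair $\pm\tilde\omega$).
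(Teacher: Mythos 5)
Your proposal is correct and follows essentially the same route as the paper: the paper's (sketched) proof likewise repeats Lemmas \ref{le222} and \ref{profi} to get a one-signed maximizer satisfying $\bar\omega_p=\lambda_p(\mathcal G\bar\omega_p)^p$ with $\|\bar\omega_p\|_{L^{1+\frac1p}(D)}=\|\tilde\omega_p\|_{L^{1+\frac1p}(D)}$, and then invokes the uniqueness in \eqref{sxxn} to conclude $\lambda_p=1$ and $\bar\omega_p=\pm\tilde\omega_p$. Your rescaling $\hat\omega=\lambda_p^{1/(p-1)}\tilde\omega$ (valid precisely because $p\neq1$) together with the norm constraint simply makes explicit the step the paper leaves implicit, and it is the intended argument.
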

\begin{proof}
The proof is similar to that of Proposition \ref{prop1}, therefore we only sketch it. Repeating the arguments in Lemmas \ref{le222} and  \ref{profi} we can easily prove  that $E$ attains its maximum value over $\mathcal S_p$; moreover, for any maximizer $\bar \omega_p$, either $\bar\omega_p$ satisfies
\begin{equation}\label{prffo}
\bar\omega_p>0\,\, \mbox{a.e. in } D, \quad \|\bar\omega_p\|_{L^{1+\frac{1}{p}}(D)}=\|\tilde\omega_p\|_{L^{1+\frac{1}{p}}(D)},\quad \bar\omega_p=\lambda_p(\mathcal G\bar\omega_p)^p,
\end{equation}
or $-\bar\omega_p$ satisfies \eqref{prffo}, where $\lambda_p$ is a positive number depending on $\bar\omega_p.$ Then by the fact that \eqref{sxxn}  admits a unique solution $\tilde\omega_p$, we can further show that $\lambda_p=1$ and  \[\bar\omega_p=\tilde\omega_p\,\,\mbox{ or }\,\, \bar\omega_p=-\tilde\omega_p.\]

\end{proof}

Proceeding as in Section 3, the following stability theorem can be easily obtained by using the variational characterization proved in Proposition \ref{eccz}.
\begin{theorem}
$\tilde\omega_p$ is stable in the $L^s$ norm of the vorticity with respect to initial perturbations in $L^\infty(D)$, where $s\in(1,+\infty),$ and is also stable in the energy norm $\|\cdot\|_E$ with respect to initial perturbations in $\mathcal S_p\cap L^\infty(D)$.
\end{theorem}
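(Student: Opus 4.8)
The final stability theorem concerns $\tilde\omega_p$ for $p\in(0,1)$, and the authors explicitly signal that its proof follows by ``repeating the arguments in Section 3,'' so my plan is to adapt the contradiction scheme used for Theorems \ref{thm1} and \ref{thm2}, with Proposition \ref{eccz} playing the role that Proposition \ref{prop1} played for $p>1$. The essential inputs are all already in place: the conserved quantities from Yudovich's theorem (the rearrangement class $\mathcal R_{\omega_0}$ and the energy $E$), the variational characterization identifying the maximizers of $E$ over $\mathcal S_p$ as exactly $\{\tilde\omega_p,-\tilde\omega_p\}$, and the compactness of maximizing sequences. The main conceptual point is that since the maximizer set is the \emph{finite} set $\{\tilde\omega_p,-\tilde\omega_p\}$, orbital stability of this set automatically upgrades to nonlinear stability of each element, exactly as in the passage from Theorem \ref{thm1} to Theorem \ref{thm3}.

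\medskip

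First I would establish the compactness analogue of Lemma \ref{scmcs}: any sequence $\{w_n\}\subset\mathcal S_p$ with $E(w_n)\to M_p:=\sup_{\mathcal S_p}E$ has a subsequence converging strongly in $L^{1+\frac1p}(D)$ to one of $\pm\tilde\omega_p$. The argument is unchanged from Lemma \ref{scmcs}: pass to a weakly convergent subsequence $w_{n_j}\rightharpoonup\eta$ in the reflexive space $L^{1+\frac1p}(D)$, use weak sequential continuity of $E$ to get $E(\eta)=M_p$ so that $\eta\in\{\tilde\omega_p,-\tilde\omega_p\}$ by Proposition \ref{eccz}, then combine weak lower semicontinuity of the norm with the fact (proved in the Lemma \ref{le222} analogue sketched in Proposition \ref{eccz}) that any maximizer saturates $\|\cdot\|_{L^{1+\frac1p}(D)}=\mu_p$ to force norm convergence $\|w_{n_j}\|\to\|\eta\|$, which with weak convergence yields strong convergence. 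Next, exactly as for Theorem \ref{thm1}, I would argue orbital stability of the set $\{\tilde\omega_p,-\tilde\omega_p\}$ in the $L^s$ norm by contradiction, using Burton's ``follower'' construction (Lemma \ref{lteb5}) to transport a nearby element of the maximizer set along the flow so that $\zeta^n(t_n,\cdot)\in\mathcal S_p$ stays in the constraint set, $E(\zeta^n(t_n,\cdot))\to M_p$ by energy conservation and uniform continuity of $E$ (Lemma \ref{ufcu}), and $\|\zeta^n(t_n,\cdot)-\omega^n(t_n,\cdot)\|_{L^s}\to 0$ by the rearrangement invariance in part (d) of Lemma \ref{lteb5}; the compactness step then produces a subsequence of $\omega^n(t_n,\cdot)$ converging to an element of $\{\tilde\omega_p,-\tilde\omega_p\}$, contradicting the assumed separation. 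The energy-norm statement follows the scheme of Theorem \ref{thm2}, using that $\mathcal S_p$ is a flow invariant and that on bounded subsets of $L^{1+\frac1p}(D)$ the energy norm is controlled by the $L^{1+\frac1p}$ norm.

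\medskip

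Finally, I would deduce stability of $\tilde\omega_p$ itself (as opposed to the pair) by the isolation argument of Theorem \ref{thm3}. Since $\tilde\omega_p>0$ a.e. while $-\tilde\omega_p<0$ a.e., the two maximizers are separated by a positive distance $d:=\|\tilde\omega_p-(-\tilde\omega_p)\|_{L^s(D)}=2\|\tilde\omega_p\|_{L^s(D)}>0$, so $\tilde\omega_p$ is isolated in the sense of \eqref{isl}. The continuity of $t\mapsto\omega(t,\cdot)$ in $L^s(D)$ (part (i) of Yudovich's theorem), combined with a triangle-inequality intermediate-value argument identical to \eqref{p43}--\eqref{p48}, then shows that if the solution starts close to $\tilde\omega_p$ it cannot jump to a neighborhood of $-\tilde\omega_p$ without passing through a ``forbidden'' annulus, hence must remain close to $\tilde\omega_p$; the same works in the energy norm via Lemma \ref{eqnms}.

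\medskip

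The step I expect to require the most care is verifying the Proposition \ref{eccz} ingredient that every maximizer is a scalar multiple $\lambda_p(\mathcal G\bar\omega_p)^p$ and that in fact $\lambda_p=1$, since for $p\in(0,1)$ the nonlinearity $t\mapsto t^p$ is concave rather than convex and there is no Nehari-manifold bookkeeping to pin down the multiplier; the resolution is to invoke the uniqueness of the solution to \eqref{sxxn} (from \cite{Kra}) after rescaling $\bar u:=\lambda_p^{1/(1-p)}\mathcal G\bar\omega_p$, which forces $\bar\omega_p\in\{\tilde\omega_p,-\tilde\omega_p\}$. Granting this, the rest of the proof is a faithful transcription of Sections 3 and 4 with $\mathcal W_p$ replaced by $\{\tilde\omega_p,-\tilde\omega_p\}$.
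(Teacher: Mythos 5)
Your proposal is correct and takes essentially the same route as the paper, which proves this theorem precisely by invoking Proposition \ref{eccz} and repeating the Section 3 contradiction/compactness/follower arguments (together with the Section 4 isolation step, which is automatic here since $\tilde\omega_p>0$ and $-\tilde\omega_p<0$ give $\|\tilde\omega_p-(-\tilde\omega_p)\|_{L^s(D)}=2\|\tilde\omega_p\|_{L^s(D)}>0$). One tiny slip in your closing remark: the rescaling that normalizes the multiplier should be $\bar u=\lambda_p^{\frac{1}{p-1}}\mathcal G\bar\omega_p$ (exponent $-1/(1-p)$, matching Lemma \ref{lem24}), not $\lambda_p^{1/(1-p)}$, though this does not affect the argument.
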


\subsection{$p=1$}
It is well-known that for any smooth bounded domain $D$ \eqref{p011} admits a unique solution  $\tilde u_1$  (see \S6.5, \cite{E} for example).
Denote $\tilde\omega_1=-\Delta \tilde u_1$. Hence $\tilde\omega_1$ is the unique function satisfying
\begin{equation}\label{sxxn2}
\tilde\omega_1>0 \,\, \mbox{ a.e. in } D, \quad \tilde\omega_1=\lambda_1\mathcal G\tilde\omega_1,\quad \|\tilde\omega_1\|_{L^2(D)}=1.
\end{equation}

\begin{proposition}\label{thdwww}
$\tilde\omega_1$ and $-\tilde\omega_1$  are the only two maximizers of $E$ over
\[\mathcal H=\left\{w\in L^{2}(D)\mid \|w\|_{L^{2}(D)}\leq 1\right\}.\]
\end{proposition}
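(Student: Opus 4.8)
The plan is to mirror the structure of Proposition~\ref{prop1} and its supporting Lemmas~\ref{le222}--\ref{lem25}, adapting each step to the case $p=1$, where the constraint set is the $L^2$-ball $\mathcal H$ and the Green's operator $\mathcal G$ is now \emph{self-adjoint and compact} on $L^2(D)$. First I would establish existence of a maximizer: since $\mathcal H$ is bounded, convex, and closed in the reflexive space $L^2(D)$, it is weakly sequentially compact, and $E$ is weakly sequentially continuous there (exactly as quoted after Yudovich's theorem), so a maximizing sequence has a weakly convergent subsequence whose limit lies in $\mathcal H$ and attains $\sup_{\mathcal H}E$. As in Lemma~\ref{le222}, a scaling argument shows any maximizer $\bar\omega$ must saturate the constraint, i.e.\ $\|\bar\omega\|_{L^2(D)}=1$, since otherwise rescaling $\bar\omega$ to the boundary of $\mathcal H$ strictly increases $E$.

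Next I would run the sign and Euler--Lagrange analysis of Lemma~\ref{profi}. The inequality $E(|\bar\omega|)\ge E(\bar\omega)$ together with the strong maximum principle for $\mathcal G$ forces $\bar\omega\ge 0$ a.e.\ or $\bar\omega\le 0$ a.e.\ in $D$. Assuming $\bar\omega\ge 0$, I would compute $\frac{d}{d\varepsilon}E(\omega^\varepsilon)\big|_{\varepsilon=0}=0$ for the normalized variations $\omega^\varepsilon=(\bar\omega+\varepsilon\phi)/\|\bar\omega+\varepsilon\phi\|_{L^2(D)}$; the key difference from the $p>1$ case is that the nonlinearity $\tilde\omega^{1/p}$ degenerates to $\bar\omega$ itself, so the Euler--Lagrange identity reads
\begin{equation*}
\bar\omega=\lambda\,\mathcal G\bar\omega\quad\text{a.e.\ in }D,\qquad \lambda=\big(2M\big)^{-1}\ \text{with }M=\sup_{\mathcal H}E,
\end{equation*}
that is, $\bar\omega$ is a positive eigenfunction of $\mathcal G$ (equivalently $\mathcal G\bar\omega$ is a positive eigenfunction of $-\Delta$). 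By Lemma~\ref{profi}'s maximum-principle step, $\bar\omega>0$ a.e.

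The remaining task is to pin down the eigenvalue and match the normalization. Here I would invoke that a positive eigenfunction of $-\Delta$ with zero Dirichlet data is, by the Krein--Rutman/Perron--Frobenius theory for $\mathcal G$, necessarily the \emph{first} eigenfunction, so the eigenvalue is $\lambda_1$; hence $-\Delta(\mathcal G\bar\omega)=\lambda_1\,\mathcal G\bar\omega$ and $\bar\omega=\lambda_1\mathcal G\bar\omega$. Combined with $\|\bar\omega\|_{L^2(D)}=1$, this shows $\bar\omega$ satisfies all three conditions in \eqref{sxxn2}, and by the stated uniqueness of the solution to \eqref{p011} we conclude $\bar\omega=\tilde\omega_1$. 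The case $\bar\omega\le 0$ gives $\bar\omega=-\tilde\omega_1$ symmetrically. I expect the main obstacle to be the eigenvalue identification: one must rule out that $\bar\omega$ is a \emph{higher} eigenfunction, and the cleanest route is the observation that maximizing $E(w)=\tfrac12\langle w,\mathcal G w\rangle$ over the unit ball is exactly the variational problem whose value is half the largest eigenvalue of the positive compact operator $\mathcal G$, namely $\tfrac{1}{2\lambda_1}$; positivity of $\bar\omega$ then forces $M=\tfrac{1}{2\lambda_1}$, giving $\lambda=\lambda_1$ directly and sidestepping any ambiguity. This also yields the explicit value $M=1/(2\lambda_1)$ as a byproduct.
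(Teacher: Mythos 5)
Your proof is correct, but it is not the route the paper takes. The paper proves Proposition \ref{thdwww} by eigenfunction expansion: writing $w=\sum_k a_k\phi_k$ in the Dirichlet eigenbasis, it computes $E(w)=\frac12\sum_k a_k^2/\lambda_k$ and concludes $2\lambda_1 E(w)\le\sum_k a_k^2\le 1$, with equality if and only if $|a_1|=1$ and $a_k=0$ for $k\ge2$; the strictness $\lambda_1<\lambda_2$ (simplicity of the principal eigenvalue) then pins the maximizers down to $\pm\phi_1=\pm\tilde\omega_1$ in a few lines, with $M=1/(2\lambda_1)$ falling out automatically. You instead carry out the adaptation of Proposition \ref{prop1} that the paper only alludes to (``the proof can be proved in a similar way''): weak compactness of $\mathcal H$ plus weak sequential continuity of $E$ for existence, the scaling argument for $\|\bar\omega\|_{L^2(D)}=1$ (valid since $M>0$), the $E(|\bar\omega|)\ge E(\bar\omega)$ comparison with the strong maximum principle for the sign, the normalized-variation Euler--Lagrange identity $\bar\omega=(2M)^{-1}\mathcal G\bar\omega$ (which at $p=1$ is even cleaner than in Lemma \ref{profi}, since no fractional power appears), and finally Krein--Rutman to identify the eigenvalue as $\lambda_1$ before invoking uniqueness of the solution of \eqref{sxxn2}. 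Each step checks out, including your closing observation that $\sup_{\mathcal H}E$ equals half the top eigenvalue $1/\lambda_1$ of the compact self-adjoint positive operator $\mathcal G$ — though note that this shortcut is essentially the paper's spectral argument in disguise, so if you use it you have nearly reproduced their proof anyway. The trade-off: your variational scheme is the one that generalizes (it is exactly what works for $p\ne1$, where no linear spectral theory is available, cf.\ Proposition \ref{eccz}), while the paper's expansion is shorter, self-contained, and handles the equality/uniqueness analysis without needing the maximum principle or Krein--Rutman as separate inputs (simplicity of $\lambda_1$, which both arguments ultimately rely on, is encoded in \eqref{increa} and in the uniqueness statement \eqref{sxxn2}).
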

\begin{proof}
The proof can be proved in a similar way to that of Proposition \ref{prop1}.  Here we provide a more straightforward proof by using the method of eigenfunction expansion. Let $\{\phi_k\}_{k=1}^{+\infty}$ be the orthogonal basis of $L^2(D)$ satisfying
\[\phi_k\in H^1_0(D),\quad-\Delta \phi_k=\lambda_k \phi_k,\quad\|\phi_k\|_{L^2(D)}=1,\]
where $\{\lambda_k\}_{k=1}^{+\infty}$ is the set of eigenvalues of $-\Delta$ in $D$ with zero Dirichlet data, satisfying
\begin{equation}\label{increa}
0<\lambda_1<\lambda_2\leq\lambda_3\leq\cdot\cdot\cdot.
\end{equation}
It is easy to check that
\begin{equation}\label{ness4}
\phi_1=\tilde\omega_1\,\mbox { or }\,\phi_1=-\tilde\omega_1.
\end{equation}
For any $w\in\mathcal H$, we expand it in $L^2(D)$ in terms of  the basis $\{\phi_k\}_{k=1}^{+\infty}$ as follows
\[w=\sum_{k=1}^{+\infty}a_k\phi_k\]
Obviously
\begin{equation}\label{el11}
\sum_{k=1}^{+\infty}a^2_k=\|w\|^2_{L^2(D)}\leq1.
\end{equation}
Moreover, it is easy to check that
\[E(w)=\frac{1}{2}\int_Dw\mathcal G w dx=\frac{1}{2}\sum_{k=1}^{+\infty}\frac{a^2_k}{\lambda_k}.\]
Hence by \eqref{increa} and \eqref{el11} we have
\begin{equation}\label{0011}
2\lambda_1E(w)=\sum_{k=1}^{+\infty}\frac{\lambda_1}{\lambda_k}a_k^2\leq \sum_{k=1}^{+\infty}\frac{\lambda_1}{\lambda_1}a_k^2= 1.
\end{equation}
Moreover, it is easy to see that the inequality in \eqref{0011} is an equality if and only if $|a_1|=1$, $a_k=0, \forall\,k\geq 2.$ This means that $\phi_1$ and $-\phi_1$ are the only two maximizers of $E$ over $\mathcal H$, or equivalently by \eqref{ness4}, $\tilde\omega_1$ and $-\tilde\omega_1$ are the only two maximizers of $E$ over $\mathcal H$.
\end{proof}

As a consequence of Proposition \ref{thdwww}, we have

\begin{theorem}\label{5666}
$\tilde\omega_1$ is stable in the $L^s$ norm of the vorticity with respect to initial perturbations in $L^\infty(D)$, where $s\in(1,+\infty),$ and is also stable in the energy norm $\|\cdot\|_E$ with respect to initial perturbations in $ \mathcal H\cap L^\infty(D)$.

\end{theorem}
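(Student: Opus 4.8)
The plan is to follow the template of Sections 3 and 4 almost verbatim, with Proposition \ref{thdwww} playing the role of Proposition \ref{prop1}, the constraint set $\mathcal H$ replacing $\mathcal S_p$, the exponent $2$ replacing $1+\frac{1}{p}$, and the two-point set $\mathcal M:=\{\tilde\omega_1,-\tilde\omega_1\}$ replacing $\mathcal W_p$. Write $M_1:=\sup_{w\in\mathcal H}E(w)=\frac{1}{2\lambda_1}$, the maximal value identified in the proof of Proposition \ref{thdwww}. The first observation I would record is that $\mathcal H$ is a flow invariant: since every weak solution satisfies $\omega(t,\cdot)\in\mathcal R_{\omega_0}$ by Yudovich's theorem, the $L^2$ norm is conserved, so $\omega_0\in\mathcal H$ forces $\omega(t,\cdot)\in\mathcal H$ for all $t$.

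Next I would establish a compactness lemma analogous to Lemma \ref{scmcs}: if $\{w_n\}\subset\mathcal H$ satisfies $E(w_n)\to M_1$, then some subsequence converges strongly in $L^2(D)$ to a point of $\mathcal M$. The argument mirrors Lemma \ref{scmcs} but is cleaner because $L^2(D)$ is a Hilbert space. Extracting a weak limit $\eta\in\mathcal H$ and using the weak sequential continuity of $E$ on $L^2(D)$ gives $E(\eta)=M_1$, so by Proposition \ref{thdwww} we have $\eta\in\mathcal M$ and $\|\eta\|_{L^2(D)}=1$. Weak lower semicontinuity then forces $\|w_n\|_{L^2(D)}\to 1=\|\eta\|_{L^2(D)}$, and in a Hilbert space weak convergence together with convergence of norms yields strong convergence.

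With the compactness lemma in hand, I would first prove orbital stability of the pair $\mathcal M$ in both norms by repeating the proofs of Theorems \ref{thm1} and \ref{thm2}. For the $L^s$ stability with perturbations in $L^\infty(D)$ one again introduces Burton's followers $\zeta^n$ via Lemma \ref{lteb5}, starting from $\zeta^n_0\in\mathcal M$ close to $\omega^n_0$; since every element of $\mathcal R_{\tilde\omega_1}$ has $L^2$ norm equal to $\|\tilde\omega_1\|_{L^2(D)}=1$, we have $\mathcal R_{\zeta^n_0}\subset\mathcal H$, so the followers stay in $\mathcal H$ exactly as before. For the energy-norm stability with perturbations in $\mathcal H\cap L^\infty(D)$ no followers are needed and the computation \eqref{g01} carries over unchanged. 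Finally, since $\tilde\omega_1>0$ a.e.\ while $-\tilde\omega_1<0$ a.e., the two points of $\mathcal M$ have positive $L^1$ distance, so $\tilde\omega_1$ is isolated in the sense of \eqref{isl}; the reduction argument of Theorems \ref{thm3} and \ref{thm4} (using Lemma \ref{eqnms} to pass freely between the $L^s$ and energy norms) then upgrades orbital stability of $\mathcal M$ to stability of the single flow $\tilde\omega_1$, which is the assertion.

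The one step deserving attention is the strong-convergence upgrade in the compactness lemma; in the $p>1$ setting this rested on the equality of limiting $L^{1+\frac{1}{p}}$ norms combined with uniform convexity, and here the Hilbert-space identity $\|w_n-\eta\|_{L^2(D)}^2=\|w_n\|_{L^2(D)}^2-2\langle w_n,\eta\rangle+\|\eta\|_{L^2(D)}^2$ makes it immediate once $\|w_n\|_{L^2(D)}\to\|\eta\|_{L^2(D)}$ and $w_n\rightharpoonup\eta$ are known. Every remaining step is a faithful transcription of Sections 3 and 4, so I expect no genuine obstacle beyond bookkeeping.
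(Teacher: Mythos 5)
Your proposal is correct and takes exactly the route the paper intends: Theorem \ref{5666} is stated there as a direct consequence of Proposition \ref{thdwww}, to be proved by repeating the arguments of Sections 3 and 4 (as the paper says explicitly for the analogous $p\in(0,1)$ case), which is precisely your transcription with $\mathcal H$, $\mathcal M=\{\tilde\omega_1,-\tilde\omega_1\}$ and $M_1=\frac{1}{2\lambda_1}$ in place of $\mathcal S_p$, $\mathcal W_p$ and $M_p$. Your supporting observations --- that conservation of the rearrangement class keeps both the flow and Burton's followers inside $\mathcal H$, that equality in the eigenfunction-expansion bound forces maximizers to have unit $L^2$ norm, and that the Radon--Riesz upgrade to strong convergence is immediate in the Hilbert space $L^2(D)$ --- are exactly the points needed for the transcription to go through.
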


\begin{remark}
As far as we know, for the stability of $\tilde\omega_1$ in the energy norm, related results are few. It has been proved in \cite{LZ2} that there exists some domain $D$ such that the steady flow with vorticity $\tilde\omega_1$ is linearly unstable in the energy norm. However, whether the flow is nonlinearly stable in the energy norm is unknown. Theorem \ref{5666} provides a partial solution to this problem, indicating that for the perturbation class $ \mathcal H\cap L^\infty(D)$ the answer is positive.

\end{remark}

\bigskip

{\bf Acknowledgements:}
The author  would like to thank the anonymous referee for the valuable comments and suggestions on this papar.
 G. Wang was supported by National Natural Science Foundation of China (12001135, 12071098) and China Postdoctoral Science Foundation (2019M661261, 2021T140163).

\phantom{s}
 \thispagestyle{empty}


\begin{thebibliography}{99}
\bibitem{Abe}
K. Abe and K. Choi, Stability of Lamb Dipoles. \textit{Arch. Ration. Mech. Anal.}, 244(2022), 877--917.

\bibitem{A1}
V. I. Arnol'd,  Conditions for nonlinear stability of stationary plane curvilinear flow of an idea fluid,
\textit{Sov. Math. Dokl.}, 6(1965), 773--777.

\bibitem{A2}
V. I. Arnol'd,  On an a priori estimate in the theory of hydrodynamical stability,
\textit{Amer. Math. Soc. Transl.}, 79(1969), 267--269.
\bibitem{BS}
M. Badiale and E. Serra,
Semilinear elliptic equations for beginners: existence results
via the  variational approach,  Universitext. Springer, London, 2011.


\bibitem{BG}
C. Bardos, Y. Guo and W. Strauss,  Stable and unstable ideal plane flows, \textit{ Chinese Ann. Math. Ser. B}, 23(2002),149--164.
\bibitem{BPW}
T. Bartsch, A. Pistoia and T. Weth, $N-$vortex equilibria for ideal fluids in bounded planar domains and new nodal solutions of the sinh-Poisson and the Lane–Emden–Fowler equations, \textit{Comm. Math. Phys.} 297(2010) 653--686.


\bibitem{B5}
G. R. Burton, Global nonlinear stability for steady ideal fluid flow in bounded planar domains,
\textit{Arch. Ration. Mech. Anal.}, 176(2005), 149--163.
\bibitem{Bu1}
G. R. Burton, H. J. Nussenzveig Lopes and M. C. Lopes Filho, Nonlinear stability for steady vortex pairs, \textit{Comm. Math. Phys.}, 324(2013), 445--463.
\bibitem{Bu2}
 G. R. Burton, Compactness and stability for planar vortex-pairs with prescribed impulse. \textit{J. Differential Equations}, 270(2021), 547--572.



\bibitem{CLW}
D. Cao, Z. Liu and J. Wei, Regularization of point vortices  for the
Euler equation in dimension two, \textit{Arch. Ration. Mech. Anal.},
212(2014), 179--217.
\bibitem{CPY}
D. Cao,  S. Peng and S. Yan,
Planar vortex patch problem in incompressible steady flow.
\textit{Adv. Math. }270(2015), 263--301.

\bibitem{CW1}
D. Cao and G. Wang, Steady vortex patches with opposite rotation directions in a planar ideal fluid, \textit{Calc. Var. Partial Differential Equations,} 58(2019), No. 75.
\bibitem{CW2}
D. Cao and G. Wang, A note on steady vortex flows in two dimensions, \textit{Proc. Amer. Math. Soc.,} 148(2020), 1153--1159.
\bibitem{CWN}
D. Cao and G. Wang, Nonlinear stability of planar vortex patches in an ideal fluid, \textit{J. Math. Fluid Mech.}, 23(2021), No. 58.
\bibitem{CWWZ}
D. Cao, J. Wan, Jie, G. Wang and  W. Zhan, Rotating vortex patches for the planar Euler equations in a disk. \textit{J. Differential Equations}, 275(2021), 509--532.

\bibitem{DA}
E. N. Dancer, Real analyticity and non-degeneracy, \textit{ Math. Ann.}, 325(2003), 369--392.




\bibitem{De}
J. M. Delort, Existence de nappes de tourbillon en dimension deux, \textit{J. Amer. Math. Soc.}, 4(1991), 553--586.

\bibitem{DGIP}
F. De Marchis, M. Grossi, I. Ianni and F. Pacella,
Morse index and uniqueness of positive solutions of the Lane-Emden problem in planar domains, \textit{J. Math. Pures Appl.}, 128(2019), 339--378.

\bibitem{DM}
R. DiPerna and A. Majda, Concentrations in regularizations for 2D incompressible flow, \textit{Comm. Pure Appl. Math.}, 40(1987), 301--345.
\bibitem{E}
L. C. Evans, Partial Differential Equations, 2nd ed., American Mathematical Society, Providence, RI, 2010.



\bibitem{Kra}
M. A. Krasnoselskii, Positive solutions of operator equations, Noordhoff, Groningen, 1964.



\bibitem{LYY} G. Li, S. Yan and J. Yang, An elliptic problem related to planar vortex pairs,
\textit{ SIAM J. Math. Anal.}, 36(2005), 1444--1460.



\bibitem{LCS}
C. Lin,  Uniqueness of least energy solutions to a semilinear elliptic equation in $\mathbb R^2$. \textit{Manuscripta Math.} 84(1994), 13--19.


\bibitem{LZ2}
Z. Lin, Some stability and instability criteria for ideal plane flows. \textit{Comm. Math. Phys.} 246(2004), 87--112.






\bibitem{MPu}
C. Marchioro and M. Pulvirenti, Mathematical theory of incompressible noviscous fluids, Springer-Verlag, 1994.








\bibitem{RW1}
 X. Ren and J. Wei, On a two-dimensional elliptic problem with large exponent in nonlinearity. \textit{Trans. Amer. Math.
Soc.}, 343(1994), 749--763.
\bibitem{RW2}
X. Ren and J. Wei, Single-point condensation and least-energy solutions. \textit{Proc. Amer. Math. Soc.}, 124(1996), 111--120.

 \bibitem{SV}
D. Smets and  J. Van Schaftingen, Desingulariation of vortices for
the Euler equation,
 \textit{ Arch. Ration. Mech. Anal.},  198(2010),   869--925.



\bibitem{WA}
G. Wang, Nonlinear stability of planar steady Euler flows associated with semistable solutions of elliptic problems,  \textit{Trans. Amer. Math. Soc.},  375(2022), 5071--5095.
\bibitem{WA2}
G. Wang,  Stability of 2D steady Euler flows with concentrated vorticity, arXiv:2111.09476.

\bibitem{WG0}
G. Wolansky, M. Ghil, An extension of Arnol'd's second stability theorem for the Euler equations, \textit{Phys. D}, 94(1996), 161--167.
\bibitem{WG}
G. Wolansky, M. Ghil,
Nonlinear stability for saddle solutions of ideal flows and symmetry breaking. \textit{Comm. Math. Phys.}, 193(1998), 713--736.
\bibitem{Wo}
W. Wolibner, Un theor\'{e}me sur l'existence du mouvement plan dun fluide parfait, homog\`{e}ne, incompressible, pendant un temps infiniment long (French), \textit{Math. Z.}, 37(1933), 698--726.



\bibitem{Y}
V. I. Yudovich, Non-stationary flow of an ideal incompressible fluid,
\textit{USSR Comp. Math. $\&$ Math.Phys}, 3(1963), 1407--1456 [English].





\end{thebibliography}
\end{document}